\newtheorem{theorem}{Theorem}[section]
\newtheorem{corollary}[theorem]{Corollary}
\newtheorem{proposition}[theorem]{Proposition}
\newenvironment{proof}{{\bf Proof.}}{\hfill$\Box$\\}
\newenvironment{remark}{{\vskip 1ex\noindent\bf Remark.}}{\\}
\newcommand{\R}{\mathbb{R}}
\newcommand{\spn}{\mathrm{span}}
\newcommand{\DD}{\mathcal{D}}
\newcommand{\GG}{\mathcal{G}}
\newcommand{\HH}{\mathcal{H}}
\newcommand{\LL}{\mathcal{L}}
\newcommand{\nc}{\nabla^c}
\begin{document}

\title{\textbf{Invariants of contact sub-pseudo-Riemannian structures and Einstein-Weyl geometry}}
\author{ Marek Grochowski\thanks{
\textbf{Faculty of Mathematics and Natural Sciences, Cardinal Stefan Wyszy\'nski University, ul.~Dewajtis 5, 01-815 Warszawa, Poland}
\newline
E-mail: m.grochowski@uksw.edu.pl}
\and Wojciech Kry\'nski\thanks{
\textbf{Institute of Mathematics, Polish Academy of Sciences, ul.~\'Sniadeckich 8, 00-956 Warszawa, Poland}
\newline
E-mail: krynski@impan.pl}}
\maketitle

\begin{abstract}
We consider local geometry of sub-pseudo-Riemannian structures on contact manifolds. We construct fundamental invariants of the structures and show that the structures give rise to Einstein-Weyl geometries in dimension 3, provided that certain additional conditions are satisfied.
\end{abstract}

\section{Introduction}

A sub-pseudo-Riemannian contact manifold $(M,\DD,g)$ is an odd-dimensional manifold $M$ endowed with a pair $(\DD,g)$ where $\DD$ is a contact distribution on $M$ and $g$ is a pseudo-Riemannian metric on $\DD$. In particular, if $g$ is positive-definite then $(\DD,g)$ is called contact sub-Riemannian structure, and if $g$ has signature $(-,+,\ldots ,+)$ then $(\DD,g)$ is called contact sub-Lorentzian structure. In the present paper we study local geometry of sub-pseudo-Riemannian contact manifolds. All objects are assumed to be smooth. It is well known, that any contact manifold $(M,\DD)$ of dimension $2n+1$ is locally diffeomorphic to the $(2n+1)$-dimensional Heisenberg group, i.e.\ there exist local coordinates $(x_1,\ldots ,x_n,y_1,\ldots,y_n,z)$ on $M$ such that 
$$
\DD=\spn\left\{\frac{\partial}{\partial x_i}-\frac{1}{2}y_i\frac{\partial}{\partial z},\ \frac{\partial}{\partial y_i}+\frac{1}{2}x_i\frac{\partial}{\partial z}\ |\ i=1,\ldots ,n\right\}.
$$
Therefore, we can assume that there is given the Heisenberg group equipped with an additional metric on $\DD$. If the metric is left-invariant with respect to the action of the Heisenberg group then the structure is called flat.

The contact sub-pseudo-Riemannian structures appear in control theory. For instance, many efforts have been made aiming to analyse the behaviour of the sub-pseudo-Riemannian geodesics \cite{A,G,HY,KM} or constructing normal forms \cite{ACG,G}. Other applications can be found in \cite{GV,M}. The geometry of the structures is well understood in dimension 3 only. In this dimension invariants have been constructed in \cite{A,AB} for the Riemannian signature and in \cite{GW} for the Lorentzian signature. An alternative approach to the equivalence problem in dimension 3 is proposed in \cite{GMW} and it uses Tanaka's theory of graded, nilpotent Lie algebras. In the present paper we present yet another approach and generalise it to higher dimensions. We construct a system of invariants of the contact sub-pseudo-Riemannian structures in any dimension and any signature (Theorems \ref{thm3} and \ref{thm4}). We also show that there are connections between sub-pseudo-Riemannian geometries and Einstein-Weyl structures \cite{D,H,JT} in dimension 3 (Theorems \ref{thmEW1} and \ref{thmEW2}). The last section contains remarks on isometries of the considered structures.

The main idea of the paper is to consider extensions of $g$ on $\DD$ to metrics on the tangent bundle $TM$. It can be done in a canonical way, because there is a well defined Reeb vector field on $M$ which is transversal to $\DD$. We use the Levi-Civita connections of the extended metrics and show that the corresponding curvature tensors contain all basic invariants of the original structure. Moreover, a more detailed analysis proves that the extended metrics give rise to Einstein-Weyl structures provided that additional conditions are satisfied. This gives an alternative construction of certain classical examples of the Einstein-Weyl structures obtained before in \cite{JT,PT} or \cite{DK}. Indeed, we extend a sub-pseude-Riemannian metric, rather than reduce a metric on a four-dimensional manifold.

\section{Dimension 3}

\paragraph{Structural functions.}
Let $\DD$ be a contact distribution on a 3-dimensional manifold $M$ and let $g$ be a metric on $\DD$. We consider two cases depending on the signature of $g$ i.e.\ $(+,+)$ or $(-,+)$. If $g$ is positive-definite we say that $(\DD,g)$ is a sub-Riemannian structure on $M$ and if $g$ is indefinite we say that $(\DD,g)$ is a sub-Lorentzian structure on $M$. We shall assume that there is given an orientation of $\DD$. If it is the case then $(\DD,g)$ is referred to as an oriented contact sub-pseudo-Riemannian structure.

Let us consider a local, positively oriented, orthonormal frame $(X_1,X_2)$ of $\DD$. In the sub-Riemannian case 
$$
g(X_1,X_1)=1,\qquad g(X_2,X_2)=1,\qquad g(X_1,X_2)=0.
$$
In the sub-Lorentzian case we assume that $X_1$ is unit time-like and $X_2$ is unit space-like, i.e. 
$$
g(X_1,X_1)=-1,\qquad g(X_2,X_2)=1,\qquad g(X_1,X_2)=0.
$$
Thus, in the matrix form 
$$
g=\left( 
\begin{array}{cc}
1 & 0\\ 
0 & 1
\end{array}
\right),
$$
or 
$$
g=\left( 
\begin{array}{cc}
-1 & 0\\ 
0 & 1
\end{array}
\right),
$$
depending on the signature.

Note that the frame $(X_1,X_2)$ is complemented to the full frame on $M$ by the Reeb vector field $X_0$. By definition $X_0$ is a vector field such that the Lie bracket $[X_0,X_i]$ is a section of $\DD$, for $i=1,2$, and $[X_1,X_2]=X_0\mod\DD$. Thus we can write
\begin{align*}
\left[X_0, X_1\right]&=c_{01}^1X_1+c_{01}^2X_2, \\
\left[X_0, X_2\right]&=c_{02}^1X_1+c_{02}^2X_2, \\
\left[X_1, X_2\right]&=c_{12}^1X_1+c_{12}^2X_2+X_0,
\end{align*}
and the coefficients $c_{ij}^k$ are referred to as the structural functions of the frame $(X_1,X_2)$. The triple $(X_1,X_2,X_0)$ defines an orientation on $M$, which is induced by the original orientation of $\DD$.

\paragraph{Invariants.}
There are two fundamental invariants of the structure $(\DD,g)$ which were defined in \cite{A} in the sub-Riemannian case and in \cite{GW} in the sub-Lorentzian case. The invariants depend on the chosen orientation of $\DD$. The first invariant, denoted $h$, can be thought of as a bi-linear form on $\DD$ and it is defined by the formula
\begin{equation}\label{hdim3}
h=\frac{1}{2}\LL_{X_0}g,
\end{equation}
where $\LL_{X_0}$ denotes the Lie derivative in the direction of $X_0$. It is well defined because the flow of the Reeb field $X_0$ preserves $\DD$, i.e. $[X_0,\DD]=\DD$. The matrix form of $h$ in the orthonormal frame takes the form 
$$
h=-\left( 
\begin{array}{cc}
c_{01}^1 & \frac{1}{2}\gamma\\ 
\frac{1}{2}\gamma & c_{02}^2
\end{array}
\right)
$$
in the sub-Riemannian case, where $\gamma=c_{01}^2+c_{02}^1$, and the form 
$$
h=-\left( 
\begin{array}{cc}
-c_{01}^1 & \frac{1}{2}\gamma\\ 
\frac{1}{2}\gamma & c_{02}^2
\end{array}
\right)
$$
in the sub-Lorentzian case, where $\gamma=c_{01}^2-c_{02}^1$.

The second invariant, denoted $\kappa$, is given by the formula 
\begin{equation}\label{kappasr}
\kappa = X_1(c_{12}^2)-X_2(c_{12}^1)-(c_{12}^1)^2-(c_{12}^2)^2+\frac{1}{2}(c_{01}^2-c_{02}^1)
\end{equation}
in the sub-Riemannian case and by the formula 
\begin{equation}\label{kappasl}
\kappa = X_1(c_{12}^2)+X_2(c_{12}^1)+(c_{12}^1)^2-(c_{12}^2)^2+\frac{1}{2}(c_{01}^2+c_{02}^1)
\end{equation}
in the sub-Lorentzian case. It is known that in the both cases $\kappa$ can be interpreted as a curvature of the system \cite{A, GMW}.
\footnote{Note that our structural functions and the Reeb vector field differs by sign with respect to the analogous objects in \cite{A,GW}. Therefore our formula for $\kappa$ is slightly different than the formulae in \cite{A,GW}. However, it is the same invariant.}

It will be convenient to consider the endomorphism $h^\sharp\colon\DD\to\DD$ 
$$
g(h^\sharp(X),Y)=h(X,Y)
$$
instead of the bi-linear form $h$. In the matrix form $h^\sharp=g^{-1}h$.

\paragraph{Canonical extensions.}
Let $c\in\R\setminus\{0\}$ be a fixed constant. Any metric $g$ on $\DD$ extends uniquely to a metric $G^c$ on $M$ such that 
$$
G^c(X_0,X_0)=c,
$$
and 
$$
G^c(X_0,X_i)=0, \qquad i=1,2.
$$
If the original metric $g$ is indefinite then, clearly, $G^c$ is indefinite too. If $g$ is positive-definite then $G^c$ is positive definite for $c>0$ and indefinite for $c<0$. Moreover, the extension $G^c$ does not depend on the orientation of $\DD$. Indeed, a change of the orientation implies that $X_0$ is multiplied by $-1$ and this does not affect $G^c$. It follows that the extension $G^c$ is canonically defined by the original structure $(\DD,g)$ and the constant $c$. Therefore the associated Levi-Civita connection $\nc$ and the curvature tensor of $\nc$ are defined by the structure $(\DD,g)$ itself.

The connection $\nc$ can be easily computed in terms of the structural functions. In the sub-Riemannian case we get 
\begin{align*}
\nc_{X_0}X_0&=0,\\
\nc_{X_0}X_1&=c_{01}^2X_2-\frac{1}{2}(\gamma+c)X_2,\\
\nc_{X_0}X_2&=c_{02}^1X_1-\frac{1}{2}(\gamma-c)X_1,\\
\nc_{X_1}X_0&=-c_{01}^1X_1-\frac{1}{2}(\gamma+c)X_2,\\
\nc_{X_1}X_1&=\frac{1}{c}c_{01}^1X_0-c_{12}^1X_2,\\
\nc_{X_1}X_2&=\frac{1}{2c}(\gamma+c)X_0+c_{12}^1X_1,\\
\nc_{X_2}X_0&=-\frac{1}{2}(\gamma-c)X_1-c_{02}^2X_2,\\
\nc_{X_2}X_1&=\frac{1}{2c}(\gamma-c)X_0-c_{12}^2X_2,\\
\nc_{X_2}X_2&=\frac{1}{c}c_{02}^2X_0+c_{12}^2X_1,
\end{align*}
where $\gamma=c_{01}^2+c_{02}^1$. In the sub-Lorentzian case we get 
\begin{align*}
\nc_{X_0}X_0&=0,\\
\nc_{X_0}X_1&=c_{01}^2X_2-\frac{1}{2}(\gamma+c)X_2,\\
\nc_{X_0}X_2&=c_{02}^1X_1+\frac{1}{2}(\gamma-c)X_1,\\
\nc_{X_1}X_0&=-c_{01}^1X_1-\frac{1}{2}(\gamma+c)X_2,\\
\nc_{X_1}X_1&=-\frac{1}{c}c_{01}^1X_0+c_{12}^1X_2,\\
\nc_{X_1}X_2&=\frac{1}{2c}(\gamma+c)X_0+c_{12}^1X_1,\\
\nc_{X_2}X_0&=\frac{1}{2}(\gamma-c)X_1-c_{02}^2X_2,\\
\nc_{X_2}X_1&=\frac{1}{2c}(\gamma-c)X_0-c_{12}^2X_2,\\
\nc_{X_2}X_2&=\frac{1}{c}c_{02}^2X_0-c_{12}^2X_1,
\end{align*}
where $\gamma=c_{01}^2-c_{02}^1$. The formulae implies the following
\begin{theorem}\label{thm1}
Let $(\DD,g)$ be a contact sub-Riemannian or sub-Lorentzian structure on a three-dimensional manifold. Then the sectional curvature $\kappa^c_\DD$ of $\DD$ with respect to the extended metric $G^c$ equals to 
\begin{equation}\label{kappac}
\kappa^c_\DD=\kappa-\frac{1}{c}\det(h^\sharp)-\frac{3c}{4}.
\end{equation}
\end{theorem}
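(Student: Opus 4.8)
The plan is to compute the sectional curvature $\kappa^c_\DD$ directly from the explicit formulas for $\nc$ given above, and then to match the result against the right-hand side of \eqref{kappac} by expressing everything in terms of the structural functions. Since $(X_1,X_2)$ is an orthonormal frame of $\DD$ (with respect to $G^c$ as well, because $G^c$ restricts to $g$ on $\DD$), the sectional curvature of the plane $\DD$ is
$$
\kappa^c_\DD=\frac{G^c(R^c(X_1,X_2)X_2,X_1)}{G^c(X_1,X_1)\,G^c(X_2,X_2)-G^c(X_1,X_2)^2},
$$
where $R^c$ is the curvature tensor of $\nc$; the denominator is $1$ in the sub-Riemannian case and $-1$ in the sub-Lorentzian case, so the two signatures must be handled in parallel but with the obvious sign bookkeeping. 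First I would write $R^c(X_1,X_2)X_2=\nc_{X_1}\nc_{X_2}X_2-\nc_{X_2}\nc_{X_1}X_2-\nc_{[X_1,X_2]}X_2$, and using $[X_1,X_2]=c_{12}^1X_1+c_{12}^2X_2+X_0$ expand the bracket term as a combination of $\nc_{X_1}X_2$, $\nc_{X_2}X_2$, $\nc_{X_0}X_2$, all of which are listed.

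The next step is the bulk of the work: substitute the nine covariant-derivative formulas, differentiate the coefficients by $X_1$ and $X_2$ where required (these produce terms like $X_1(c_{12}^2)$ and $X_2(c_{12}^1)$, which are exactly the derivative terms appearing in \eqref{kappasr}--\eqref{kappasl}), and collect the coefficient of $X_1$ in $R^c(X_1,X_2)X_2$. The $\frac1c$-terms coming from the $X_0$-components of $\nc_{X_i}X_j$ will recombine into $-\frac1c\det(h^\sharp)$: indeed, from the matrix form of $h$ one reads off $\det(h^\sharp)=c_{01}^1c_{02}^2-\tfrac14\gamma^2$ (sub-Riemannian) and $\det(h^\sharp)=-c_{01}^1c_{02}^2+\tfrac14\gamma^2$ (sub-Lorentzian, because of the $g^{-1}$ factor), and these are precisely the quadratic combinations of the $c_{0i}^j$ that survive. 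The constant $-\tfrac{3c}{4}$ will emerge from the terms that are pure in $c$ (the parts of $\nc_{X_0}X_i$ and $\nc_{X_i}X_0$ proportional to $c$, paired against each other). Everything else must reorganise into the expression \eqref{kappasr} or \eqref{kappasl} for $\kappa$; in particular the $\tfrac12(c_{01}^2-c_{02}^1)$ (resp.\ $\tfrac12(c_{01}^2+c_{02}^1)$) summand of $\kappa$ comes from the $\nc_{X_0}X_2$ contribution inside the $[X_1,X_2]$-term.

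The main obstacle is purely organisational rather than conceptual: there is a genuine risk of sign and bookkeeping errors, since the sub-Lorentzian formulas differ from the sub-Riemannian ones in several scattered places (the signs of $\nc_{X_0}X_2$, $\nc_{X_1}X_1$, $\nc_{X_2}X_0$, $\nc_{X_2}X_2$, and the definition of $\gamma$) and the $G^c$-normalisation contributes an extra overall sign. I would therefore carry out the two cases separately and in full, using the Koszul-type formulas above verbatim, and then verify the final identity \eqref{kappac} by checking it against a known example — e.g.\ the flat (left-invariant) Heisenberg structure, where all $c_{ij}^k$ with $\{i,j\}\neq\{1,2\}$ vanish, $\kappa=0$, $h=0$, and the formula should reduce to $\kappa^c_\DD=-\tfrac{3c}{4}$, which is a standard computation for the Heisenberg group with the Reeb direction scaled by $c$.
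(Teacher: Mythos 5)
Your plan is the same as the paper's proof: compute $G^c(R(X_1,X_2)X_2,X_1)$ directly from the listed formulae for $\nc$, normalise by the Gram determinant (which you correctly note is $-1$ in the Lorentzian signature), and identify the resulting terms with $\kappa$, $-\frac{1}{c}\det(h^\sharp)$ and $-\frac{3c}{4}$ via the expressions for $\det(h^\sharp)$ in structural functions. One slip to fix before you carry out the bookkeeping: in the sub-Lorentzian case the correct identity is $\det(h^\sharp)=c_{01}^1c_{02}^2+\frac{1}{4}\gamma^2$, not $-c_{01}^1c_{02}^2+\frac{1}{4}\gamma^2$; indeed $\det h=-c_{01}^1c_{02}^2-\frac{1}{4}\gamma^2$ and the factor $\det(g^{-1})=-1$ flips both signs, so the $c_{01}^1c_{02}^2$ term comes out positive. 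With that correction the matching goes through, and your Heisenberg sanity check ($\kappa^c_\DD=-\tfrac{3c}{4}$ when all structural functions vanish) is a sound safeguard against the remaining sign hazards.
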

\begin{proof}
Let $(X_1,X_2)$ be an orthonormal frame of $\DD$. The formula is obtained by direct computations of $G^c(R(X_1,X_2)X_2,X_1)=G^c(\nc_{X_1}\nc_{X_2}X_2-\nc_{X_2}\nc_{X_1}X_2-\nc_{[X_1,X_2]}X_2,X_1)$ using \eqref{kappasr}, \eqref{kappasl}, the formulae for $\nc$ provided above and the following identities 
$$
\det(h^\sharp)=c_{01}^1c_{02}^2-\frac{1}{4}\gamma^2,\qquad \frac{1}{2}\gamma-c_{01}^2=\frac{1}{2}(c_{02}^1-c_{01}^2)
$$
in the sub-Riemannian case and 
$$
\det(h^\sharp)=c_{01}^1c_{02}^2+\frac{1}{4}\gamma^2,\qquad -\frac{1}{2}\gamma+c_{01}^2=\frac{1}{2}(c_{02}^1+c_{01}^2)
$$
in the sub-Lorentzian case.
\end{proof}

\begin{remark}
The formula for $\kappa_\DD^c$ also holds if $c$ is not a constant but a function on $M$. Indeed, if $c$ depends on a point in $M$ then $\nc_{X_i}X_0$ and $\nc_{X_0}X_i$, where $i=1,2$, are modified by a term of the form $\frac{1}{2c}X_i(c)X_0$. However, the modification does not affect $\kappa_\DD^{c}$. In particular if 
$$
\frac{3}{4}c^2+\det(h^\sharp)=0
$$
then \eqref{kappac} reduces to $\kappa_\DD^{c}=\kappa$.
\end{remark}

\begin{remark}
The invariant $\kappa$ was defined in \cite{GW} by formula \eqref{kappasl} as an invariant of a time- and space-oriented sub-Lorentzian structure. Theorem \ref{thm1} permits to regard $\kappa$ as an invariant of a sub-Lorentzian structure without any orientation. Indeed, our choice of the orientation does not affect neither the metric $G^c$ nor the sectional curvature $\kappa_{\DD}^c$.

On the other hand, the invariant $h$ defined by \eqref{hdim3} depends essentially on $X_0$ and thus on the orientation of $\DD$. However, the condition $h=0$ is independent of the choice of the orientation.
\end{remark}

\begin{remark}
Observe that no matter the value of $c$ is, the trajectories of the Reeb field are geodesics for the metric $G^{c}$.
\end{remark}

\paragraph{Symmetric case.}
Assume that $h=0$. It means that the Reeb vector field is an infinitesimal isometry of $g$ i.e.\ the metric $g$ is preserved by the flow of $X_0$ (cf. \cite{GW}). Therefore one can consider (at least locally) the quotient manifold 
$$
N=M/X_0
$$
and there is unique metric $\tilde g$ on $N$ such that its pullback to $\DD$ on $M$ coincides with $g$. The metric $\tilde{g}$ will be referred to as the projection of $g$. As pointed out above, the condition $h=0$ is independent of the orientation of $\DD$. Similarly, $N$ and $\tilde g$ do not depend on the sign of $X_0$.

The following theorem and its corollaries slightly generalise the results obtained in \cite{A,GW}.
\begin{theorem}\label{thm2}
Let $(\DD,g)$ be a contact sub-Riemannian or sub-Lorentzian structure on a three-dimensional manifold $M$. If $h=0$ then
the projection of $g$ to the quotient manifold $N$ determines the structure $(\DD,g)$ uniquely.
\end{theorem}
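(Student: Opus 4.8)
The plan is to exhibit an explicit local model for $(\DD,g)$ built solely from $(N,\tilde g)$ and then to show that this model is canonical. First I would record the consequence of $h=0$ that goes slightly beyond the remark preceding the theorem: since $h=\tfrac12\LL_{X_0}g=0$, since $X_0$ always preserves $\DD$, and since the contact form $\alpha$ characterised by $\ker\alpha=\DD$ and $\alpha(X_0)=1$ satisfies $\LL_{X_0}\alpha=\iota_{X_0}d\alpha+d(\alpha(X_0))=0$, the Reeb flow preserves the \emph{entire} structure $(\DD,g,\alpha)$. Hence on the local quotient $N=M/X_0$ one obtains not only $\tilde g$ but also a well-defined $2$-form $\Omega$ with $\pi^*\Omega=d\alpha$, where $\pi\colon M\to N$ is the projection. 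The normalisation built into $[X_1,X_2]=c_{12}^1X_1+c_{12}^2X_2+X_0$ gives, for the dual coframe $(\alpha^1,\alpha^2,\alpha)$ of $(X_1,X_2,X_0)$, the identity $d\alpha=-\alpha^1\wedge\alpha^2$, so that $\Omega$ is, up to the sign fixed by the orientation, the volume form of $\tilde g$. In particular no information is lost in passing to $N$: the induced data is precisely $\tilde g$ together with its orientation.

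Next I would produce the local model. Fix $p\in M$; since $X_0$ is nowhere zero and transverse to $\DD$, a flow-box of the foliation by $X_0$-orbits identifies a neighbourhood of $p$ with $V\times\R$ (coordinate $z$), where $V\subset N$ and $X_0=\partial_z$. On $V\times\R$ the contact form must read $\alpha=a\,dz+\pi^*\beta$ for a function $a$ and a ($z$-dependent) $1$-form $\beta$ on $V$; the normalisation $\alpha(X_0)=1$ forces $a=1$, and $\LL_{X_0}\alpha=0$ forces $\partial_z\beta=0$, so $\alpha=dz+\pi^*\theta$ with $\theta$ a $1$-form on $V$ satisfying $d\theta=\Omega$, i.e.\ $\theta$ is a primitive of the volume form of $\tilde g$ (which exists after shrinking $V$, by the Poincar\'e lemma). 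Finally $\DD=\ker\alpha$ and $g$ is the pull-back of $\tilde g$ under the isomorphism $\pi_*\colon\DD\to TN$. Thus, locally, $(\DD,g)$ is the ``contactisation'' of $(N,\tilde g)$ attached to a choice of primitive $\theta$ of its volume form; conversely, any such choice yields a contact sub-pseudo-Riemannian structure with Reeb field $\partial_z$, with $h=0$, and with projection $(N,\tilde g)$ (the contact, Reeb and $h=0$ conditions are immediate from $d\alpha=\pi^*\Omega$ and $z$-independence of $\tilde g$).

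It then remains to check that the construction is independent of the choices. If $\theta,\theta'$ are two primitives of $\Omega$ on $V$, then $\theta'-\theta$ is closed, hence equals $df$ locally, and the fibre translation $(p,z)\mapsto(p,z-f(p))$ is an isomorphism between the two corresponding models which induces the identity on $N$ and therefore preserves $g$. More generally, given two structures with $h=0$ and an orientation-preserving isometry $\phi\colon N_1\to N_2$, one has $\phi^*\Omega_2=\Omega_1$, so $\phi^*\theta_2-\theta_1=df$ for some function $f$, and the map $\Phi(p,z)=(\phi(p),z-f(p))$ satisfies $\Phi^*\alpha_2=\alpha_1$ (a one-line computation using $\Phi^*\pi_2^*\theta_2=\pi_1^*\theta_1+\pi_1^*df$) and, since $\pi_2\circ\Phi=\phi\circ\pi_1$ and $\phi$ is an isometry, $\Phi^*g_2=g_1$ on $\DD_1$; hence $\Phi$ is an isomorphism of the sub-pseudo-Riemannian contact structures. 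I expect the only genuine work to be the bookkeeping of the first paragraph — verifying that the Reeb normalisation forces $d\alpha=-\alpha^1\wedge\alpha^2$, so that $\Omega$ is exactly the volume form of $\tilde g$ — together with the flow-box normal form $\alpha=dz+\pi^*\theta$; once these are in place, the conceptual point, that the gauge freedom in the primitive $\theta$ is precisely the freedom in the bundle automorphisms $(p,z)\mapsto(p,z-f(p))$, closes the argument.
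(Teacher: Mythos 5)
Your argument is correct, but it takes a genuinely different route from the paper's. The paper lifts an orthonormal frame $(\tilde X_1,\tilde X_2)$ from $N$ to $\DD$, observes that $h=0$ forces $c_{01}^i=c_{02}^i=0$ while $c_{12}^1,c_{12}^2$ are inherited from $N$, and then concludes by invoking Cartan's theorem on the equivalence of frames. You replace the appeal to Cartan by an explicit normal form: $(\DD,g)$ is the contactisation $\alpha=dz+\pi^*\theta$ of $(N,\tilde g)$, where $\theta$ is a primitive of the descended two-form $\Omega$; the identity $d\alpha=-\alpha^1\wedge\alpha^2$, which is just the Reeb normalisation $[X_1,X_2]=X_0\bmod\DD$ in dual form, is what makes $\Omega$ equal (up to sign) to the area form of $\tilde g$ and hence a datum already contained in the projection rather than an independent one. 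Your gauge analysis ($\theta\mapsto\theta+df$ absorbed by the fibre translation $z\mapsto z-f(p)$) then produces the intertwining diffeomorphism explicitly. What your route buys is self-containedness and a transparent explanation of why no datum beyond $\tilde g$ is needed in dimension $3$; it also pinpoints exactly where the argument changes in dimension $2n+1$, where $\Omega$ is no longer forced by $\tilde g$ and must be carried along as an extra hypothesis --- precisely the assumption $\LL_{X_0}\omega=0$ and the projected $\tilde\omega$ appearing in Theorem \ref{thm4}. What the paper's route buys is brevity and a proof that generalises verbatim. One small bookkeeping point you should add: the sign of $\Omega$ depends on the chosen orientation of $\DD$ (equivalently of $N$), so for an orientation-reversing isometry of $(N,\tilde g)$, or to compare the two sign choices, your lift must be composed with $(p,z)\mapsto(p,-z)$; this does not affect the conclusion that $\tilde g$ alone determines $(\DD,g)$.
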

\begin{proof}
Let $N=M/X_0$ be the quotient manifold equipped with $\tilde g$. Let $(\tilde X_1,\tilde X_2)$ be an orthonormal frame on $N$. Any vector field on $N$ lifts uniquely to a vector field on $M$, tangent to $\DD$. In particular one can consider lift of the frame $(\tilde X_1,\tilde X_2)$ and get a frame $(X_1,X_2)$ of $\DD$. The frame $(X_1,X_2)$ is orthonormal for $g$, as follows from the definition of $\tilde g$. The structure functions of $(X_1, X_2)$ are special. Indeed 
$$
c_{01}^i=c_{02}^i=0,\qquad i=1,2,
$$
because $X_1$ and $X_2$ are lifts of vector fields on $N$ and $X_0$ is tangent to the fibres of $M\to N$. Moreover $c_{12}^1$ and $c_{12}^2$ are constant along $X_0$ and are actually defined by 
$$
[\tilde X_1,\tilde X_2]=c_{12}^1\tilde X_1+c_{12}^2\tilde X_2.
$$
Now, assume that there are given two structures $(\DD_1,g_1)$ and $(\DD_2,g_2)$ such that $h_1=h_2=0$ and the corresponding metrics $\tilde g_1$ and $\tilde g_2$ are isomorphic. Then one can choose orthonormal frames for $\tilde g_1$ and $\tilde g_2$ such that their structural functions coincide. It follows that the lifted frames also share the structural functions. The theorem follows from the result of E. Cartan on the equivalence of frames.
\end{proof}

\begin{corollary}\label{cor1}
If $h=0$ then the Gauss curvature of the metric $\tilde g$ on the quotient manifold $N$ equals to $\kappa$.
\end{corollary}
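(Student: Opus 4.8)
The plan is to keep working in the special orthonormal frame that was already produced in the proof of Theorem \ref{thm2}. Recall that, since $h=0$, one may choose an orthonormal frame $(\tilde X_1,\tilde X_2)$ of $(N,\tilde g)$, lift it to an orthonormal frame $(X_1,X_2)$ of $\DD$, and complete it by the Reeb field $X_0$; then $c_{01}^i=c_{02}^i=0$ for $i=1,2$, while $c_{12}^1$ and $c_{12}^2$ are constant along $X_0$ and coincide with the structural functions of $(\tilde X_1,\tilde X_2)$ on $N$, namely $[\tilde X_1,\tilde X_2]=c_{12}^1\tilde X_1+c_{12}^2\tilde X_2$. Substituting $c_{01}^2=c_{02}^1=0$ into \eqref{kappasr} and into \eqref{kappasl}, the invariant $\kappa$ reduces to an expression built solely from $c_{12}^1$, $c_{12}^2$ and their derivatives along $X_1,X_2$; since these data descend to $N$, this reduced expression can be read directly as a function on $N$.

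Next I would compute the Gauss curvature of $(N,\tilde g)$ from this frame and compare. Let $(\theta^1,\theta^2)$ be the coframe on $N$ dual to $(\tilde X_1,\tilde X_2)$; the structure equations give $d\theta^1=-c_{12}^1\,\theta^1\wedge\theta^2$ and $d\theta^2=-c_{12}^2\,\theta^1\wedge\theta^2$. Solving for the Levi--Civita connection one-form $\omega$ using metric compatibility and vanishing torsion, and then evaluating the curvature two-form $d\omega$ on $(\tilde X_1,\tilde X_2)$, yields the Gauss curvature; a short computation shows that the outcome agrees, term by term, with the reduced expression for $\kappa$ obtained in the first step, which proves the corollary. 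In the sub-Lorentzian case one has to pay attention to signs: ``Gauss curvature'' is to be understood as $\tilde g(R(\tilde X_1,\tilde X_2)\tilde X_2,\tilde X_1)$ evaluated in an orthonormal frame, consistently with the sign convention adopted for $\kappa_\DD^c$ in Theorem \ref{thm1}.

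An alternative route, which sidesteps the sign bookkeeping, is to deduce the statement from Theorem \ref{thm1}. When $h=0$ we have $h^\sharp=0$, hence $\det(h^\sharp)=0$, so \eqref{kappac} becomes $\kappa_\DD^c=\kappa-\tfrac{3c}{4}$. On the other hand, the projection $\pi\colon(M,G^c)\to(N,\tilde g)$ is a (pseudo-)Riemannian submersion whose fibres are the trajectories of $X_0$, with $G^c(X_0,X_0)=c$; since the vertical part of $[X_1,X_2]$ is precisely $X_0$, O'Neill's formula for the sectional curvature of a horizontal plane reads $\tilde K=\kappa_\DD^c+\tfrac{3}{4}G^c(X_0,X_0)=\kappa_\DD^c+\tfrac{3c}{4}=\kappa$, as claimed. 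In either approach the only genuine obstacle is tracking the sign conventions correctly, in particular the precise meaning of ``Gauss curvature'' in Lorentzian signature; the computation itself is short and routine.
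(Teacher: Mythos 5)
Your first argument is precisely the paper's proof: work in the lifted orthonormal frame from Theorem \ref{thm2}, observe that all terms involving $c_{0i}^j$ drop out of \eqref{kappasr} and \eqref{kappasl}, and recognise the remainder as the frame expression for the Gauss curvature of $\tilde g$ (understood, as you correctly flag, as $\tilde g(R(\tilde X_1,\tilde X_2)\tilde X_2,\tilde X_1)$ in an orthonormal frame, matching the paper's unnormalised convention for $\kappa^c_\DD$). Your alternative route is genuinely different and worth noting: since $h=0$ forces $\det(h^\sharp)=0$, Theorem \ref{thm1} gives $\kappa^c_\DD=\kappa-\tfrac{3c}{4}$, while $\pi\colon(M,G^c)\to(N,\tilde g)$ is a semi-Riemannian submersion with one-dimensional non-degenerate fibres ($G^c(X_0,X_0)=c\neq 0$) and horizontal distribution $\DD$; O'Neill's curvature identity, with $A_{X_1}X_2=\tfrac12[X_1,X_2]^v=\tfrac12 X_0$, adds back exactly $3\,G^c(A_{X_1}X_2,A_{X_1}X_2)=\tfrac{3c}{4}$, so $\tilde K=\kappa$. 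This second route avoids recomputing the curvature of $\tilde g$ from structure equations and explains conceptually where the $-\tfrac{3c}{4}$ term in \eqref{kappac} comes from; its only cost is that you must invoke the semi-Riemannian (not merely Riemannian) version of O'Neill's formula and keep the unnormalised sign convention consistently on both sides, which you do. Both arguments are correct.
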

\begin{proof}
We use an orthonormal frame $(X_1, X_2)$ of $\DD$ defined by the lift of $(\tilde X_1, \tilde X_2)$ as in the proof of Theorem \ref{thm2} above. Then, there is no term involving $c_{0i}^j$ in $\kappa$ and the formula reduces to the Gauss curvature of $\tilde g$ computed in terms of the structural functions of $(\tilde X_1,\tilde X_2)$.
\end{proof}

\begin{corollary}\label{cor2}
The structure $(\DD,g)$ is locally equivalent to the flat structure on the Heisenberg group if and only if $h=0$ and $\kappa=0$.
\end{corollary}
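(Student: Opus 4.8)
The plan is to prove the two implications separately, using Theorem~\ref{thm2}, Corollary~\ref{cor1}, and the explicit description of the flat model.

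\emph{The ``only if'' direction.} Here I would simply exhibit the standard left-invariant frame of the Heisenberg group: with $X_1=\partial_x-\tfrac12 y\,\partial_z$, $X_2=\partial_y+\tfrac12 x\,\partial_z$ and $X_0=\partial_z$ one checks that $[X_0,X_1]=[X_0,X_2]=0$ and $[X_1,X_2]=X_0$, so all structural functions $c_{ij}^k$ vanish. Using this as an orthonormal frame for the flat metric, the matrix formula for $h$ gives $h=0$, and formula \eqref{kappasr} (resp. \eqref{kappasl}) gives $\kappa=0$, since every term of $\kappa$ is built out of the $c_{ij}^k$. As the vanishing of $h$ and the function $\kappa$ are invariants of $(\DD,g)$ (independent of the chosen orthonormal frame and, by the remarks following Theorem~\ref{thm1}, of the orientation of $\DD$), every structure locally equivalent to the flat one satisfies $h=0$ and $\kappa=0$.

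\emph{The ``if'' direction.} Assume $h=0$ and $\kappa=0$. Since $h=0$, Theorem~\ref{thm2} applies and $(\DD,g)$ is determined, up to local equivalence, by the projected metric $\tilde g$ on the quotient $N=M/X_0$. By Corollary~\ref{cor1} the Gauss curvature of $\tilde g$ equals $\kappa=0$, so $\tilde g$ is a flat two-dimensional pseudo-Riemannian metric and is therefore locally isometric to the standard flat metric of the corresponding signature (the Euclidean plane in the sub-Riemannian case, the Minkowski plane in the sub-Lorentzian case). On the other hand, the flat Heisenberg structure itself has $h=0$ — in the left-invariant frame above all $c_{0i}^j$ vanish — and, by the computation of the previous paragraph, its projection to $M/X_0$ is exactly that standard flat metric. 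Hence the projections of $(\DD,g)$ and of the flat model are locally isometric, and Theorem~\ref{thm2} then yields a local equivalence of the two structures.

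The only point that needs a little care is the bookkeeping of signatures and orientations: one must observe that a sub-Riemannian (resp. sub-Lorentzian) structure with $h=0$ projects to a Riemannian (resp. Lorentzian) surface, so that the two flat projections genuinely match, and that the orientation of $\DD$ — on which $h$, but not the condition $h=0$, depends — can be chosen so that the Cartan-equivalence argument used in the proof of Theorem~\ref{thm2} produces the desired local diffeomorphism. Neither issue is serious, and one sees that both hypotheses are really needed: $h=0$ in order to descend to the quotient, and $\kappa=0$ in order to make that quotient flat.
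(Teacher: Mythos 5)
Your argument is correct and is exactly the one the paper intends: the corollary is stated without proof precisely because it follows from Theorem~\ref{thm2} and Corollary~\ref{cor1} in the way you describe (the flat model has all structural functions zero in the standard left-invariant frame, hence $h=0$, $\kappa=0$; conversely $h=0$ lets one descend to $N$, $\kappa=0$ makes $\tilde g$ flat, and Theorem~\ref{thm2} promotes the local isometry of the flat quotients to a local equivalence of the contact sub-pseudo-Riemannian structures). Your closing remarks on signature and orientation bookkeeping are accurate and harmless.
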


\section{Einstein-Weyl geometry}

\paragraph{Definitions.}
We shall briefly recall a definition of the Einstein-Weyl structures in dimension 3 and refer to \cite{D,H,JT,K,PT} for more information on the subject. Let $M$ be a three-dimensional manifold equipped with a conformal metric $[G]$. We say that a linear connection $\nabla$ on $M$ is a Weyl connection for $[G]$ if 
$$
\nabla G=\eta\otimes G
$$
for some one-form $\eta$. The one-form is not defined uniquely by $[G]$ but depends on the representative $G$. Indeed, if one takes a different representative, given in the form $\phi^2 G$, then $\eta$ is modified by a closed form $2d(\ln\phi)$. A Weyl structure on $M$ is a pair $([G],\nabla)$. Note that, as a particular example, one can take as $\nabla$ the Levi-Civita connection for $G$. In this case $\eta=0$. In general, the connection $\nabla $ is uniquely determined by a representative $G$ and the corresponding $\eta$. Therefore, we will sometimes write that the Weyl structure is given by a pair $(G,\eta)$.

A Weyl structure is called Einstein-Weyl if it satisfies the following conformal Einstein equation 
$$
Ric(\nabla)_{sym}=\frac{1}{3}R_GG,
$$
where $Ric(\nabla)_{sym}$ is the symmetric part of the Ricci tensor and $R_G$ is the scalar curvature of $\nabla$ with respect to $G$.

\paragraph{Ricci curvature of sub-pseudo-Riemannian structures.}
Let us consider now a three-dimensional sub-Riemannian or sub-Lorentzian structure $(\DD,g)$ and assume that the Reeb vector field $X_0$ is an isometry. We have the following
\begin{proposition}\label{propEW1}
If $h=0$ then the Ricci tensor of $\nc$ in the frame $(X_1,X_2,X_0)$ is represented by the matrix 
$$
Ric(\nc)=\left( 
\begin{array}{ccc}
\kappa-\frac{c}{2} & 0 & 0\\ 
0 & \kappa-\frac{c}{2} & 0\\ 
0 & 0 & \frac{c^2}{2}
\end{array}
\right)
$$
in the sub-Riemannian case, and by the matrix 
$$
Ric(\nc)=\left( 
\begin{array}{ccc}
\kappa-\frac{c}{2} & 0 & 0\\ 
0 & -\kappa+\frac{c}{2} & 0\\ 
0 & 0 & -\frac{c^2}{2}
\end{array}
\right)
$$
in the sub-Lorentzian case.
\end{proposition}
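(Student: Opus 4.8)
The plan is to compute the curvature tensor $R$ of $\nc$ explicitly and take its trace, working in the adapted frame already used in the proof of Theorem~\ref{thm2}. Since $h=0$, one may take $(X_1,X_2)$ to be the lift to $M$ of an orthonormal frame $(\tilde X_1,\tilde X_2)$ of the quotient $N=M/X_0$; then $c_{01}^i=c_{02}^i=0$ for $i=1,2$ (so in particular $\gamma=0$), the functions $c_{12}^1,c_{12}^2$ are constant along $X_0$, and $[X_0,X_1]=[X_0,X_2]=0$. Substituting these relations into the formulae for $\nc$ listed above collapses them to a short list in which the only surviving Christoffel-type coefficients involve $c$, $c_{12}^1$ and $c_{12}^2$. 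Since $Ric(\nc)$ is a tensor, it suffices to establish the asserted matrix in this particular frame.

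Next I would compute the components $R(X_i,X_j)X_k=\nc_{X_i}\nc_{X_j}X_k-\nc_{X_j}\nc_{X_i}X_k-\nc_{[X_i,X_j]}X_k$ needed to form the traces $Ric(X_j,X_k)=\tr\bigl(W\mapsto R(W,X_j)X_k\bigr)$. The $(X_0,X_0)$ entry requires only $R(X_1,X_0)X_0$ and $R(X_2,X_0)X_0$, which in turn involve solely the covariant derivatives $\nc_{X_i}X_0$ and $\nc_{X_0}X_i$; a two-line computation gives $R(X_i,X_0)X_0=\tfrac{c^2}{4}X_i$ in the sub-Riemannian case and $-\tfrac{c^2}{4}X_i$ in the sub-Lorentzian case, hence $Ric(X_0,X_0)=\pm\tfrac{c^2}{2}$. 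For the remaining diagonal entries one can shortcut part of the work using Theorem~\ref{thm1}: with $h=0$ one has $\det(h^\sharp)=0$, so $\kappa^c_\DD=\kappa-\tfrac{3c}{4}$, and this quantity controls the $X_2$-coefficient of $R(X_2,X_1)X_1$ (respectively the $X_1$-coefficient of $R(X_1,X_2)X_2$, up to the sign forced by $G^c(X_1,X_1)=-1$ in the sub-Lorentzian case); the complementary contributions $R(X_0,X_j)X_j=\pm\tfrac{c}{4}X_0$ are computed directly, and the two add up to $\kappa-\tfrac{c}{2}$, respectively $-\kappa+\tfrac{c}{2}$. Finally, all off-diagonal entries vanish: those $Ric(X_i,X_j)$ with $i,j\in\{1,2\}$, $i\neq j$, and those $Ric(X_i,X_0)$ vanish by direct inspection of the few surviving curvature terms (and, where convenient, by the skew-symmetry of the Levi-Civita curvature in its last two arguments).

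I expect no genuine obstacle here: once the adapted frame is in place the argument is a finite, essentially mechanical verification. The only points that require care are (i) the reduction to the lifted frame and the attendant simplification of $\nc$, which rests on Theorem~\ref{thm2}, and (ii) the sign bookkeeping in the sub-Lorentzian case, where the time-like direction $X_1$ and the factor $G^c(X_0,X_0)=c$ must be tracked consistently through the contraction that defines the Ricci tensor; it is precisely this bookkeeping that flips the $(2,2)$ and $(0,0)$ entries relative to the sub-Riemannian case.
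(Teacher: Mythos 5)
Your proof is correct and follows the same basic strategy as the paper's: a direct computation of the curvature of $\nc$, reducing the diagonal $\DD$-entries to Theorem \ref{thm1} plus the terms $G^c(R(X_0,X_j)X_j,X_0)=\pm\frac{c^2}{4}$, and checking that all off-diagonal entries vanish. The one genuine difference is the choice of frame. The paper works in an arbitrary orthonormal frame, where $h=0$ only forces $\gamma=c_{01}^1=c_{02}^2=0$ (so $c_{01}^2=\mp c_{02}^1$ may survive), and it invokes the Jacobi identity to dispose of the resulting terms; you instead pass to the lifted frame from $N=M/X_0$, where all $c_{0i}^j$ vanish and $X_0(c_{12}^k)=0$, which shortens the computation considerably and makes the Jacobi relations unnecessary. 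This is legitimate, but be aware that ``$Ric(\nc)$ is a tensor'' by itself does not justify verifying the matrix in one frame only: the proposition asserts the matrix in an arbitrary orthonormal frame, so you also need the (easy) observation that the asserted matrices are invariant under the residual frame freedom --- the $\DD\times\DD$ block is a multiple of $g|_{\DD}$, hence fixed by $O(2)$ resp.\ $O(1,1)$, the $(0,0)$ entry is untouched, and $\kappa$ is itself an invariant. With that one-line remark added the argument is complete; the individual values you quote ($R(X_i,X_0)X_0=\pm\tfrac{c^2}{4}X_i$, $R(X_0,X_j)X_j=\pm\tfrac{c}{4}X_0$, and the $\DD$-block coming from $\kappa^c_\DD=\kappa-\tfrac{3c}{4}$ when $\det h^\sharp=0$) all check out against the listed connection formulae, including the sub-Lorentzian sign bookkeeping.
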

\begin{proof}
The proof is based on computations. We shall show the details in the sub-Lorentzian case only. Let us recall that the condition $h=0$ in terms of the structural functions is equivalent to 
$$
\gamma=0,\qquad c_{01}^1=0,\qquad c_{02}^2=0.
$$
Using this and applying the Jacobi identity to vector fields $X_1$, $X_2$ and $X_0$ we get $X_0(c_{12}^1)-X_1(c_{02}^1)+c_{01}^2c_{12}^2=0$ and $X_0(c_{12}^2)+X_2(c_{01}^2)+c_{02}^1c_{12}^1=0$. By direct calculations we
compute 
$$
G^c(R(X_i,X_j)X_k,X_i)=0
$$
provided that $j\neq k$. It follows that $Ric(\nc)$ is diagonal. In order to get the diagonal terms we show that 
$$
G^c(R(X_0,X_1)X_1,X_0)=\frac{c^2}{4},\qquad G^c(R(X_0,X_2)X_2,X_0)=-\frac{c^2}{4},
$$
and the rest follows from Theorem \ref{thm1} and symmetries of the Riemann tensor.
\end{proof}

As a corollary we get the following examples of Einstein-Weyl structures.
\begin{theorem}\label{thmEW1}
Let $(\DD,g)$ be a contact sub-Riemannian or sub-Lorentzian structure on a three-dimensional manifold $M$. Assume that $h=0$ and $\kappa$ is constant and non-zero. Then the pair $([G^\kappa],\nabla^\kappa)$ defines an Einstein-Weyl structure on $M$. The Einstein-Weyl structure has the Riemannian signature if $g$ is sub-Riemannian and $\kappa>0$ and otherwise it has the Lorentzian signature.
\end{theorem}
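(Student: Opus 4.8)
The plan is to derive the statement as a direct consequence of Proposition \ref{propEW1}, specializing the free constant $c$ of the canonical extension to the value $c=\kappa$. Since $\kappa$ is assumed constant and non-zero, $G^\kappa$ is a genuine (non-degenerate) metric on $M$ --- the condition $\kappa\neq 0$ is precisely the requirement $c\in\R\setminus\{0\}$ in the definition of the extension --- and $\nabla^\kappa$ is its Levi--Civita connection. In particular $\nabla^\kappa G^\kappa=0$, so $([G^\kappa],\nabla^\kappa)$ is automatically a Weyl structure, with Weyl one-form $\eta=0$; the conformal Einstein equation $Ric(\nabla^\kappa)_{sym}=\frac13 R_{G^\kappa}G^\kappa$ then reduces to the ordinary Einstein condition for $G^\kappa$.

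To verify that condition I would recall that, in the positively oriented orthonormal frame $(X_1,X_2,X_0)$, one has $G^\kappa=\mathrm{diag}(1,1,\kappa)$ in the sub-Riemannian case and $G^\kappa=\mathrm{diag}(-1,1,\kappa)$ in the sub-Lorentzian case. Substituting $c=\kappa$ into the matrices of Proposition \ref{propEW1} gives $Ric(\nabla^\kappa)=\mathrm{diag}\!\left(\frac{\kappa}{2},\frac{\kappa}{2},\frac{\kappa^2}{2}\right)=\frac{\kappa}{2}G^\kappa$ in the sub-Riemannian case and $Ric(\nabla^\kappa)=\mathrm{diag}\!\left(\frac{\kappa}{2},-\frac{\kappa}{2},-\frac{\kappa^2}{2}\right)=-\frac{\kappa}{2}G^\kappa$ in the sub-Lorentzian case, i.e.\ $Ric(\nabla^\kappa)=\varepsilon\frac{\kappa}{2}G^\kappa$ with $\varepsilon=\pm 1$. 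Taking the $G^\kappa$-trace in dimension $3$ yields $R_{G^\kappa}=3\varepsilon\frac{\kappa}{2}$, hence $\frac13 R_{G^\kappa}G^\kappa=\varepsilon\frac{\kappa}{2}G^\kappa=Ric(\nabla^\kappa)$, and the Einstein--Weyl equation holds. The signature claim is then read off directly from the diagonal form of $G^\kappa$: $\mathrm{diag}(1,1,\kappa)$ is positive definite for $\kappa>0$ and has signature $(+,+,-)$ for $\kappa<0$, while $\mathrm{diag}(-1,1,\kappa)$ has signature $(-,+,+)$ or $(-,+,-)$, both Lorentzian, according to the sign of $\kappa$; this is exactly the asserted dichotomy.

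I do not anticipate a real obstacle, since the essential computation is already contained in Proposition \ref{propEW1}; the only point deserving emphasis is that $c=\kappa$ is exactly the value for which the (already diagonal) Ricci tensor becomes proportional to $G^\kappa$ rather than merely diagonal --- for any other choice of $c$ the $X_0X_0$-component is not in the ratio to the $\DD$-components dictated by $G^\kappa$ --- which is why one extends using the invariant $\kappa$ itself, and why $\kappa$ must be constant for the construction to define a metric and a connection globally on $M$.
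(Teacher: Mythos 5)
Your proposal is correct and follows exactly the route the paper takes: observe that the Levi--Civita connection $\nabla^\kappa$ is a Weyl connection for $[G^\kappa]$ with $\eta=0$, and then substitute $c=\kappa$ into Proposition \ref{propEW1} to see that $Ric(\nabla^\kappa)=\pm\frac{\kappa}{2}G^\kappa$, which gives the Einstein condition after tracing. The paper's proof is just a two-line version of this; your verification of the proportionality, the trace identity, and the signature dichotomy fills in precisely the details the authors leave implicit.
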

\begin{proof}
Let us observe that $\nabla^\kappa$ is a Weyl connection for $[G^\kappa]$ as the Levi-Civita connection. The fact that the Einstein equation is satisfied follows directly from Proposition \ref{propEW1}, where we substitute $c=\kappa$.
\end{proof}

\begin{remark}
Note that all structures $([G^\kappa],\nabla^\kappa)$, with constant $\kappa$, are locally equivalent up to the sign of $\kappa$ and signatures of $g$ and $G^c$. Indeed a simple rescaling of $G^\kappa$ by a constant function gives an equivalence of $([G^\kappa],\nabla^\kappa)$ and $([G^1],\nabla^1)$ or $([G^{-1}],\nabla^{-1})$, depending on the sign of $\kappa$.
\end{remark}

\paragraph{Deformations.}
We shall show now that the Einstein-Weyl structures defined above in Theorem \ref{thmEW1} can be deformed to 1-parameter families of Einstein-Weyl structures. Let $\alpha$ be the one-form on $M$ annihilating $\DD$ and such that 
$$
\alpha(X_0)=1,
$$
where $X_0$ is the Reeb vector field, as before. We will consider Weyl structures defined by pairs $(G^c,2\epsilon c\alpha)$, where $G^c$ is an extension of $g$, $c\in\R\setminus\{0\}$ and $\epsilon\in\R$. All these structures are canonically defined by the original sub-pseudo-Riemannian structure. The Weyl connection defined by $(G^c,2\epsilon c\alpha)$ will be denoted $\nabla^{\epsilon c}_\alpha$. We have
\begin{proposition}\label{propEW2}
If $h=0$ then the symmetric part of the Ricci tensor of $\nabla^{\epsilon c}_\alpha$ in the frame $(X_1,X_2,X_0)$ is represented by the matrix 
$$
Ric(\nabla^{\epsilon c}_\alpha)_{sym}=\left( 
\begin{array}{ccc}
\kappa-\frac{c}{2}-\epsilon^2c & 0 & 0\\ 
0 & \kappa-\frac{c}{2}-\epsilon^2c & 0\\ 
0 & 0 & \frac{c^2}{2}
\end{array}
\right)
$$
in the sub-Riemannian case, and by the matrix 
$$
Ric(\nabla^{\epsilon c}_\alpha)_{sym}=\left( 
\begin{array}{ccc}
\kappa-\frac{c}{2}+\epsilon^2c & 0 & 0\\ 
0 & -\kappa+\frac{c}{2}-\epsilon^2c & 0\\ 
0 & 0 & -\frac{c^2}{2}
\end{array}
\right).
$$
in the sub-Lorentzian case.
\end{proposition}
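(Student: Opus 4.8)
The plan is to compute the Ricci tensor of the Weyl connection $\nabla^{\epsilon c}_\alpha$ directly from the connection $\nc$ plus a correction term, and then extract the symmetric part. Recall that if $(G,\eta)$ is a Weyl structure then the Weyl connection differs from the Levi-Civita connection $\nc$ of $G$ by
\begin{equation*}
\nabla_X Y = \nc_X Y - \tfrac12\eta(X)Y - \tfrac12\eta(Y)X + \tfrac12 G(X,Y)\,\eta^\sharp,
\end{equation*}
where $\eta^\sharp$ is the $G$-dual of $\eta$. In our situation $\eta = 2\epsilon c\,\alpha$, and since $\alpha$ annihilates $\DD$ and $\alpha(X_0)=1$ with $G^c(X_0,X_0)=c$ (sub-Riemannian) or $=c$ (sub-Lorentzian; the same normalization), we get $\eta^\sharp = \frac{2\epsilon c}{c}X_0 = 2\epsilon X_0$ in the sub-Riemannian case and likewise $\eta^\sharp = 2\epsilon X_0$ in the sub-Lorentzian case (the sign bookkeeping of $G^c(X_0,X_0)$ must be tracked carefully here). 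So the deformation is governed by the single vector field $X_0$ and the single one-form $\alpha$.

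The key computational steps I would carry out, in order: first, write $\nabla^{\epsilon c}_\alpha = \nc + A$ where $A$ is the explicit $(1,2)$-tensor above, and tabulate $A(X_i, X_j)$ for all $i,j\in\{0,1,2\}$ using $h=0$ (so that $\nc$ takes the simple form recorded after Theorem~\ref{thm1} with $\gamma=c_{01}^1=c_{02}^2=0$); second, use the standard formula for how the Ricci tensor changes under a connection deformation, $\mathrm{Ric}(\nc+A)(Y,Z) = \mathrm{Ric}(\nc)(Y,Z) + (\nc_{X_i}A)(\,\cdot\,) \text{ contractions} + A\!*\!A$ terms, or alternatively just recompute $R(X_i,X_j)X_k$ for the deformed connection from scratch since the correction $A$ is so simple (it only involves $\alpha$ and $X_0$); third, symmetrize and insert the known values from Proposition~\ref{propEW1}, namely $\mathrm{Ric}(\nc) = \mathrm{diag}(\kappa-\tfrac c2,\ \kappa-\tfrac c2,\ \tfrac{c^2}2)$ in the sub-Riemannian case and $\mathrm{diag}(\kappa-\tfrac c2,\ -\kappa+\tfrac c2,\ -\tfrac{c^2}2)$ in the sub-Lorentzian case. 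The off-diagonal entries should again vanish: the correction terms are built from $\alpha\otimes\alpha$, $\alpha\otimes(\text{something})$ and $G^c$, all of which are diagonal in the frame $(X_1,X_2,X_0)$, and the $d\alpha$-type contributions land in the skew part which is discarded. One then checks the $X_0X_0$-entry is unchanged ($\tfrac{c^2}2$ resp. $-\tfrac{c^2}2$) because $\alpha$ is closed enough in the relevant directions — more precisely $d\alpha = -(X_1^* \wedge X_2^*)$ modulo lower terms, so contributions along $X_0$ drop — and that each of the two $\DD$-diagonal entries picks up exactly $\mp\epsilon^2 c$ (with the sign opposite to that of $X_0$'s causal character), which is where the $\epsilon^2$ appears: it comes from the quadratic term $A*A$, specifically from $-\tfrac12\eta(X_i)\cdot(\text{the }\eta^\sharp\text{ term})$ which is proportional to $(2\epsilon c)\cdot(2\epsilon)/c \sim \epsilon^2 c$ after contraction.

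The main obstacle is the sign and normalization bookkeeping, not any conceptual difficulty: the two signatures, the freedom in the sign of $X_0$, the factor of $c$ versus $\tfrac1c$ appearing in $\eta^\sharp$, and the distinction between $d\alpha$ and its $\DD$-part all have to be handled consistently, and it is easy to produce $2\epsilon^2 c$ or $\tfrac12\epsilon^2 c$ instead of $\epsilon^2 c$ if the deformation formula is mis-normalized. A useful sanity check is $\epsilon=0$, which must reproduce Proposition~\ref{propEW1} exactly, and a second check is that the trace of the symmetric Ricci against $G^c$ should match $3\times$ the $(0,0)$-block structure forced by the scalar curvature — i.e. one can verify $R_{G^c}$ computed two ways. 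I would organize the write-up as: (i) state the deformation formula and compute $\eta^\sharp$; (ii) reduce to $h=0$ normal form and list $A(X_i,X_j)$; (iii) assert the off-diagonal vanishing by the diagonality argument; (iv) compute the three diagonal entries, citing Proposition~\ref{propEW1} for the $\epsilon=0$ part and exhibiting only the new $\epsilon^2 c$ term explicitly; (v) note the sub-Riemannian case is identical with the obvious sign changes. The actual arithmetic is routine once the deformation tensor $A$ is written down correctly.
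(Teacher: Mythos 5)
Your proposal is correct and follows essentially the same route as the paper: the paper likewise expresses $Ric(\nabla^{\epsilon c}_\alpha)_{sym}$ through the Levi-Civita connection of $G^c$ and the one-form $\eta=2\epsilon c\alpha$ (citing the standard Weyl formulae rather than rederiving the difference tensor $A$), uses the skew-symmetry of $\nc\alpha$ together with the diagonality of $\alpha\otimes\alpha$, $G^c$ and $Ric(\nc)$ to kill the off-diagonal entries, and then computes the diagonal terms directly, with the $\mp\epsilon^2c$ corrections on $\DD$ coming from the quadratic term exactly as you describe. (Only a small bookkeeping remark: since $G^c(X_0,X_0)=c$ in both signatures, $\eta^\sharp=2\epsilon X_0$ uniformly, and the sign of the $\epsilon^2c$ correction tracks the causal character of $X_i$, not of $X_0$.)
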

\begin{proof}
We have $\nc\alpha=-c\alpha^1\wedge\alpha^2$, where $\alpha^i(X_j)=\delta^i_j$. Therefore $\nc\alpha$ is skew-symmetric and it follows from the formulae for the Ricci tensor of $\nabla^{\epsilon c}_\alpha$ in terms of the Levi-Civita connection of $G^c$ and the one-form $\eta$ defining the Weyl structure $(G^c,\eta)$ that only diagonal terms appear in $Ric(\nabla^{\epsilon c}_\alpha)_{sym}$ (see \cite{D,JT}). We have computed these terms directly.
\end{proof}

Our main result in this section is the following
\begin{theorem}\label{thmEW2}
Let $(\DD,g)$ be an oriented contact sub-Riemannian or sub-Lorentzian structure on a three-dimensional manifold $M$. Assume that $h=0$ and $\kappa$ is constant. If $\kappa=0$ then the pair $(G^c,2\epsilon c\alpha)$ defines an Einstein-Weyl structure on $M$ if and only if $g$ is sub-Lorentzian, $\epsilon^2=1$ and $c\in\R\setminus\{0\}$ is arbitrary. If $\kappa\neq 0$ then the pair $(G^c,2\epsilon c\alpha)$ defines an Einstein-Weyl structure on $M$ if and only if 
$$
c=\frac{\kappa}{1+\epsilon^2},
$$
in the Riemannian signature, or 
$$
c=\frac{\kappa}{1-\epsilon^2},
$$
in the Lorentzian signature, provided that $\epsilon^2\neq 1$.
\end{theorem}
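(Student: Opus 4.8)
The plan is to reduce the Einstein--Weyl condition to an algebraic equation in $c$, $\epsilon$ and $\kappa$ by feeding the Ricci data of Proposition \ref{propEW2} into the conformal Einstein equation $Ric(\nabla^{\epsilon c}_\alpha)_{sym}=\tfrac13 R_{G^c}G^c$. First I would compute the scalar curvature $R_{G^c}$ of $\nabla^{\epsilon c}_\alpha$ with respect to $G^c$. Since $R_{G^c}$ is the $G^c$-trace of the full Ricci tensor and $G^c$ is symmetric, the skew-symmetric part of $Ric(\nabla^{\epsilon c}_\alpha)$ drops out, so $R_{G^c}$ equals the $G^c$-trace of $Ric(\nabla^{\epsilon c}_\alpha)_{sym}$, which is read off directly from Proposition \ref{propEW2} together with $G^c=\mathrm{diag}(\pm1,1,c)$ in the frame $(X_1,X_2,X_0)$. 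This yields
$$
R_{G^c}=2\kappa-\tfrac c2-2\epsilon^2c
$$
in the sub-Riemannian case and
$$
R_{G^c}=-2\kappa+\tfrac c2-2\epsilon^2c
$$
in the sub-Lorentzian case.

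Next I would write the conformal Einstein equation componentwise in the same frame. The off-diagonal entries are automatically satisfied ($Ric(\nabla^{\epsilon c}_\alpha)_{sym}$ is diagonal by Proposition \ref{propEW2}, and $G^c$ is diagonal in this frame), while the three diagonal entries give three scalar equations. A direct check — dividing the $(X_0,X_0)$ equation by $c\neq0$ — shows that all three collapse to the single condition $c(1+\epsilon^2)=\kappa$ in the sub-Riemannian case and $c(1-\epsilon^2)=\kappa$ in the sub-Lorentzian case. Since $\nabla^{\epsilon c}_\alpha$ is a Weyl connection for $[G^c]$ by construction, and since $\kappa$ is constant so that these are genuine constraints on constants, it only remains to solve them under the standing hypothesis $c\in\R\setminus\{0\}$.

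Finally I would split into cases. If $\kappa=0$: in the sub-Riemannian case $c(1+\epsilon^2)=0$ is impossible for $c\neq0$, while in the sub-Lorentzian case $c(1-\epsilon^2)=0$ forces $\epsilon^2=1$ and leaves $c$ an arbitrary non-zero constant. If $\kappa\neq0$: in the Riemannian signature $1+\epsilon^2>0$, so $c=\kappa/(1+\epsilon^2)$ is the unique, necessarily non-zero, solution; in the Lorentzian signature $c(1-\epsilon^2)=\kappa\neq0$ forces $\epsilon^2\neq1$ and then $c=\kappa/(1-\epsilon^2)$. This is precisely the statement of the theorem.

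There is no genuinely hard step here; the content is the scalar-curvature computation and the verification that the three diagonal Einstein equations coincide. The points requiring care are the sign conventions in the sub-Lorentzian signature, where the $(X_1,X_1)$-entry of $G^c$ is $-1$ and affects both the trace defining $R_{G^c}$ and the right-hand side of the Einstein equation, and checking — as already done inside the proof of Proposition \ref{propEW2} via the skew-symmetry of $\nc\alpha=-c\,\alpha^1\wedge\alpha^2$ — that passing from the Levi-Civita connection of $G^c$ to $\nabla^{\epsilon c}_\alpha$ introduces no symmetric off-diagonal contribution to the Ricci tensor.
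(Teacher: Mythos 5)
Your proposal is correct and follows essentially the same route as the paper: both reduce the Einstein--Weyl condition, via Proposition \ref{propEW2}, to the single algebraic constraint $c(1+\epsilon^2)=\kappa$ (Riemannian) or $c(1-\epsilon^2)=\kappa$ (Lorentzian) and then solve by cases. The only cosmetic difference is that you compute $R_{G^c}$ explicitly and check the three diagonal equations, whereas the paper phrases the same step as proportionality of $Ric(\nabla^{\epsilon c}_\alpha)_{sym}$ to $G^c$; your values of $R_{G^c}$ and the resulting equations are correct.
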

\begin{proof}
We use Proposition \ref{propEW2} and deduce that the symmetric part of the Ricci tensor of $\nabla^{\epsilon c}_\alpha$ is proportional to the metric if and only if the following algebraic equation 
$$
\kappa-\frac{1}{2}c\pm\epsilon^2c=\frac{c}{2}
$$
is satisfied, where the sign next to $\epsilon^2$ depends on the signature. Thus, the Einstein equation is reduced to $c(1+\epsilon^2)=\kappa$ in the Riemannian case, or $c(1-\epsilon^2)=\kappa$ in the Lorentzian case. Therefore, if $\kappa=0$ then $g$ has to be Lorentzian and $\epsilon^2=1$ because $c\neq 0$. If $\kappa\neq 0 $ then $c=\frac{\kappa}{1\pm\epsilon^2}$ depending on the signature.
\end{proof}

If $\kappa=0$ then the family $(G^c,2c\alpha)$ defined by Theorem \ref{thmEW2} in the sub-Lorentzian case is the family of Nil Einstein-Weyl structures defined in \cite{DK}. It follows that the structures are of hyper-CR type \cite{D, DK}. The family is a deformation of the flat Lorentzian Einstein-Weyl structure. Indeed, if $c$ tends to $0$ then the structure tends to the flat one.

If $\kappa\neq 0$ then by rescaling we can assume $\kappa=1$ or $\kappa=-1$, depending on the sign of $\kappa$. The corresponding families $(G_\epsilon,\eta_\epsilon)=(G^{\frac{1}{1\pm\epsilon^2}},\frac{ 2\epsilon}{1\pm\epsilon^2}\alpha)$ for positive $\kappa$ and $(G_\epsilon,\eta_\epsilon)=(G^{\frac{-1}{1\pm\epsilon^2}},\frac{ -2\epsilon}{1\pm\epsilon^2}\alpha)$ for negative $\kappa$, defined in Theorem \ref{thmEW2}, are deformations of the structures defined in Theorem \ref{thmEW1}. Indeed, taking $\epsilon=0$ we get that $\eta_\epsilon=0$ and the corresponding Weyl connection is the Levi-Civita connection of $G^1$ or $G^{-1}$, respectively. Summarising, there are four families, depending on the sign of $\kappa$ and signature of $g$.

The structures can be easily write down in coordinates. For instance, in the Lorentzian signature and for $\kappa=1$ the family is given by $(G_\epsilon,\eta_\epsilon)$, where 
$$
G_\epsilon=-(dx-xdy)^2+dy^2+\frac{1}{1-\epsilon^2}(dz-xdy)^2,\qquad \eta_\epsilon=\frac{2\epsilon}{1-\epsilon^2}(dz-xdy).
$$
This is the Lorentzian counterpart of the Einstein-Weyl metric on the Berger sphere written in non-spherical coordinates \cite[page 96]{PT}. Our family in the Riemannian signature with $\kappa=1$ coincides with \cite[eq. (5.10)]{JT}.

\section{Dimension $2n+1$}

\paragraph{Structural functions.}
Let $M$ be a manifold of dimension $2n+1$ with a contact distribution $\DD$ equipped with a metric $g$ of arbitrary signature. We assume that there is given an orientation of $\DD$, and, as in dimension 3, we choose a local, positively oriented, orthonormal frame $(X_1,\ldots ,X_{2n})$ of $\DD$. We have that $g(X_i,X_j)=0$ for $i\neq j$ and 
$$
g(X_{i},X_{i})=s_{i},\qquad i=1,\ldots 2n
$$
where $s_{i}\in \{-1,1\}$ depends on the signature. The frame is complemented to a full frame on $M$ by the Reeb vector field, denoted $X_0$. In order to define $X_0$ we consider a one-form $\alpha$ annihilating $\DD$. It is given up to a multiplication by a non-vanishing function. However $\alpha$ can be normalised by the condition
\begin{equation}\label{norm}
(d\alpha |_{\DD})^{\wedge n}(X_1,\ldots ,X_{2n})=(-1)^{n}.
\end{equation}
The condition does not depend on the choice of a positively oriented, orthonormal frame. The Reeb vector field is uniquely defined by 
$$
X_0\in \ker d\alpha,\qquad \alpha (X_0)=1.
$$

It follows from the definition that the flow of $X_0$ preserves $\DD$, i.e.\, $[X_0,\DD]=0$. Therefore 
$$
[X_0,X_i]=\sum_{k=1}^{2n} c_{0i}^kX_k
$$
for some functions $c_{0i}^j$. Moreover, we can write 
$$
[X_i,X_j]=\sum_{k=1}^{2n} c_{ij}^kX_k + c_{ij}^0X_0
$$
and in this way we define structural functions of the frame $(X_1,\ldots,X_{2n})$. Note that $c_{ij}^0=-d\alpha(X_i,X_j)$.

\begin{remark}
There are more subtle notions of orientation of sub-pseudo-Riemannian structures based on the so-called casual decomposition of $\DD$ into its space-like and time-like subspaces \cite{G2}. However, we shall not use them, and only consider an orientation of $\DD$ itself needed in order to define the Reeb vector field.
\end{remark}

\paragraph{Canonical extension and invariatns.}
The one-form $\alpha$, used above in the definition of the Reeb field, defines an invariant skew-symmetric form on $\DD$ via the formula $d\alpha|_\DD$. We will denote it by $\omega$, i.e. 
\begin{equation}\label{omega}
\omega(X,Y)=-d\alpha(X,Y)
\end{equation}
where $X$ and $Y$ are sections of $\DD$. If $n=1$ then $\omega$ is just the volume form on $\DD$ defining the orientation.

In order to construct additional invariants one proceeds similarly to the case of dimension 3. First, one defines an invariant bi-linear symmetric form $h$ by 
\begin{equation}\label{hdimgen}
h=\frac{1}{2}\LL_{X_0}g.
\end{equation}
Then one considers extended metrics $G^c$ defined by 
$$
G^c(X_0,X_0)=c,
$$
and 
$$
G^c(X_0,\DD)=0,
$$
and gets the Levi-Civita connections $\nc$ and the corresponding curvature tensors $R^c$. The part of $R^c$ restricted to $\DD$ will be denoted $R^c_\DD$, whereas the corresponding sectional curvature of a plane $\spn\{X,Y\}\subset\DD$ will be denoted $\kappa^c_\DD(X,Y)$. The extended metrics, Levi-Civita connections $\nc$ and the associated curvatures do not depend on the orientation.

As in dimension 3, we will use $h^\sharp\colon\DD\to\DD$ defined by 
$$
g(h^\sharp(X),Y)=h(X,Y).
$$
Additionally, we will extend $g$ to a metric on exterior powers $\bigwedge^k\DD$ in the standard way.

In terms of the structural functions of a frame $(X_1,\ldots X_{2n})$ we have 
$$
\omega(X_i,X_j)=c_{ij}^0,
$$
and 
$$
h(X_i,X_j)=-\frac{1}{2}(c_{0i}^js_j+c_{0j}^is_i),
$$
where as before $s_i\in\{-1,1\}$ depends on the signature of $g$. Moreover, we have 
$$
\nc_{X_i}X_j=\frac{1}{2c}(c_{ij}^0c+c_{0j}^is_i+c_{0i}^js_j)X_0+ \sum_{k=1}^{2n}\frac{1}{2s_k}(c_{ij}^ks_k-c_{jk}^is_i-c_{ik}^js_j)X_k
$$
and 
$$
\nc_{X_i}X_0=-\sum_{k=1}^{2n}\frac{1}{2s_k}(c_{0i}^ks_k+c_{0k}^is_i+c_{ik}^0c)X_k.
$$
Additionally $\nc_{X_0}X_i=\nc_{X_i}X_0+[X_0,X_i]$ and $\nc_{X_0}X_0=0$. The following result generalises Theorem \ref{thm1}.

\begin{theorem}\label{thm3}
Let $(\DD,g)$ be a contact sub-pseudo-Riemannian structure on a $(2n+1)$-dimensional manifold. Then the sectional curvature $\kappa^c_\DD$ decomposes as follows 
\begin{equation}\label{kappacgen}
\kappa^c_\DD(X_i,X_j)=\kappa_\DD(X_i,X_j)-\frac{1}{c}g(X_i\wedge X_j,h^\sharp(X_i)\wedge h^\sharp(X_j))-\frac{3c}{4}\omega(X_i,X_j)^2,
\end{equation}
where $(X_1,\ldots, X_{2n})$ is an orthonormal frame of $(\DD,g)$ and $\kappa_\DD(X_i,X_j)$ is a quantity independent of the chosen constant $c$. In terms of the structural functions 
\begin{equation}\label{kappagen}
\begin{aligned}
\kappa_\DD(X_i,X_j)&=X_i(c_{ij}^j)s_j-X_j(c_{ij}^i)s_i-\sum_{k=1}^{2n}(c_{ij}^k)^2s_k \\
&+\sum_{k=1}^{2n}\frac{1}{4s_k}(c_{ij}^ks_k+c_{jk}^is_i-c_{ik}^js_j)^2+ \frac{1}{2}c_{ij}^0(c_{0i}^js_j-c_{0j}^is_i).
\end{aligned}
\end{equation}
\end{theorem}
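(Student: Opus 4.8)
The plan is to compute $\kappa^c_\DD(X_i,X_j)$ directly from the definition, exactly in the spirit of the proof of Theorem~\ref{thm1}, the only genuinely new difficulty being to control the index sums. Fix a positively oriented orthonormal frame $(X_1,\ldots,X_{2n})$ of $(\DD,g)$ together with the Reeb field $X_0$, and recall that for the plane $\spn\{X_i,X_j\}$ the sectional curvature is read off from $G^c(R^c(X_i,X_j)X_j,X_i)$ as in the proof of Theorem~\ref{thm1}, where
$$
R^c(X_i,X_j)X_j=\nc_{X_i}\nc_{X_j}X_j-\nc_{X_j}\nc_{X_i}X_j-\nc_{[X_i,X_j]}X_j,\qquad [X_i,X_j]=\sum_{k=1}^{2n}c_{ij}^kX_k+c_{ij}^0X_0.
$$
I would organise the calculation by splitting every $\nc_{X_a}X_b$ into its $\DD$-part and its $X_0$-component: from the connection formulae stated before the theorem, the $X_0$-component of $\nc_{X_a}X_b$ is $\phi_{ab}X_0$ with $\phi_{ab}=\tfrac12\omega(X_a,X_b)-\tfrac1c h(X_a,X_b)$ (so $\phi_{aa}=-\tfrac1c h(X_a,X_a)$, since $\omega$ is skew), while $\nc_{X_a}X_0=h^\sharp(X_a)-\tfrac c2 J(X_a)$, where $J\colon\DD\to\DD$ is the endomorphism determined by $g(J(X),Y)=\omega(X,Y)$; finally $\nc_{X_0}X_b=\nc_{X_b}X_0+[X_0,X_b]$, which again lies in $\DD$.

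Substituting these into the expression above, the computation breaks into a ``$\DD$-channel'' and an ``$X_0$-channel''. In the $X_0$-channel only three terms survive the pairing with $X_i$: $\phi_{jj}\,\nc_{X_i}X_0$ (coming from $\nc_{X_i}\nc_{X_j}X_j$), $-\phi_{ij}\,\nc_{X_j}X_0$ (from $-\nc_{X_j}\nc_{X_i}X_j$) and $-c_{ij}^0\,\nc_{X_0}X_j$ (from $-\nc_{[X_i,X_j]}X_j$). Pairing each with $X_i$, using $g(J(X_i),X_i)=\omega(X_i,X_i)=0$ and sorting by powers of $c$, these three terms contribute precisely
$$
-\frac1c\bigl(h(X_i,X_i)h(X_j,X_j)-h(X_i,X_j)^2\bigr)-\frac{3c}{4}\,\omega(X_i,X_j)^2
$$
(the $c$-linear contributions are $-\tfrac c4\omega(X_i,X_j)^2$ from the second term and $-\tfrac c2\omega(X_i,X_j)^2$ from the third, the first contributing none), and the bracket equals $g(X_i\wedge X_j,h^\sharp(X_i)\wedge h^\sharp(X_j))$ once one writes out the induced metric on $\bigwedge^2\DD$ together with $h^\sharp(X_i)=\sum_k s_k^{-1}h(X_i,X_k)X_k$. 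The entire remaining $\DD$-channel is manifestly $c$-free, because every $c$ appearing inside a $\phi_{ab}$ is cancelled by the $1/c$ inside the corresponding $\nc_{X_a}X_0$, while the single $c$ inside each $\nc_{X_a}X_0$ has already been absorbed into the $\omega$-term. This proves \eqref{kappacgen} and, with it, the $c$-independence of $\kappa_\DD(X_i,X_j)$. Two sanity checks confirm the split: it is consistent with the Remark following Theorem~\ref{thm1} (allowing $c$ to be a function only introduces terms $\propto X_a(c)X_0$ in $\nc$, which die under the pairing with $X_i$), and it specialises to \eqref{kappac} when $n=1$.

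It then remains to match the $c$-free remainder with the right-hand side of \eqref{kappagen}. The two first-derivative terms come from differentiating along $X_i$, resp.\ $X_j$, the $X_i$-coefficient of $\nc_{X_j}X_j$, resp.\ of $\nc_{X_i}X_j$; by the Koszul formula these coefficients are $\tfrac{s_j}{s_i}c_{ij}^j$, resp.\ $c_{ij}^i$, so that after the pairing with $X_i$ one gets $X_i(c_{ij}^j)s_j-X_j(c_{ij}^i)s_i$. The term $\tfrac12 c_{ij}^0\bigl(c_{0i}^js_j-c_{0j}^is_i\bigr)$ is the $c$-free residue of the $X_0$-channel (recall $c_{ij}^0=\omega(X_i,X_j)$). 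Finally the quadratic part $-\sum_k(c_{ij}^k)^2s_k+\sum_k\frac1{4s_k}(c_{ij}^ks_k+c_{jk}^is_i-c_{ik}^js_j)^2$ is assembled from the $\DD$-channel products inside $\nc_{X_i}\nc_{X_j}X_j$, $-\nc_{X_j}\nc_{X_i}X_j$ and $-\sum_k c_{ij}^k\nc_{X_k}X_j$; inserting the stated Christoffel coefficients and completing the square yields exactly this expression.

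The step I expect to be the main obstacle is precisely this last bookkeeping. Composing two instances of the connection formula requires careful handling of the index positions and of the signs $s_a$ — a single misplaced index (e.g.\ $c_{ij}^i$ in place of $c_{ij}^j$ in a Christoffel coefficient) already spoils the answer — and one then has to recognise that the rather large collection of quadratic structural-function products arising from three different sources collapses into the single square $\sum_k\frac1{4s_k}(c_{ij}^ks_k+c_{jk}^is_i-c_{ik}^js_j)^2$ together with $-\sum_k(c_{ij}^k)^2s_k$. Conceptually, however, nothing new is needed beyond Theorem~\ref{thm1}: all the connection data is already available, the computation is finite, and the dimension-$3$ case doubles as a template and as a numerical check.
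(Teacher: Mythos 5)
Your proposal is correct and follows essentially the same route as the paper: a direct computation of $G^c(R^c(X_i,X_j)X_j,X_i)$ from the stated formulae for $\nc$ in terms of structural functions, generalising the proof of Theorem~\ref{thm1} (which is all the paper's own one-sentence proof amounts to). Your intermediate identities --- $\phi_{ab}=\tfrac12\omega(X_a,X_b)-\tfrac1c h(X_a,X_b)$, $\nc_{X_a}X_0=h^\sharp(X_a)-\tfrac c2 J(X_a)$, the three surviving $X_0$-channel terms with their gradation in powers of $c$, and the first-derivative coefficients --- all check against the connection formulae and correctly reproduce \eqref{kappacgen} together with the last term of \eqref{kappagen}, as well as the three-dimensional specialisation.
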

\begin{proof}
The proof is reduced to computations generalising three-dimensional case. In particular \eqref{kappacgen} generalises \eqref{kappac}, and \eqref{kappagen} generalises \eqref{kappasr} and \eqref{kappasl}.
\end{proof}

\begin{remark}
Note that the sectional curvature $\kappa^c_\DD(X_i,X_j)$ determines completely the Riemann tensor $R^c_\DD$ by the well-known formula \cite[Lemma 3.3.3, page 144]{J}. By the same formula applied to $\kappa_\DD(X_i,X_j)$ we can define a $(3,1)$-tensor, denoted $R_\DD$, independent on the choice of $c$.
\end{remark}

\paragraph{Symmetric case.}
Assume that $h=0$. We consider the quotient manifold 
$$
N=M/X_0
$$
with the unique metric $\tilde g$, called projection of $g$ to $N$, such that its pullback to $\DD$ on $M$ coincides with $g$. Similarly, if $\LL_{X_0}\omega=0$ then there is the unique 2-form $\tilde\omega$ on $N$, called projection of $\omega$, such that its pullback to $\DD$ on $M$ coincides with $\omega$. As in the case of dimension 3, the condition $h=0$, and similarly $\LL_{X_0}\omega=0$, is independent of the orientation of $\DD$. We have the following generalisation of Theorem \ref{thm2}.

\begin{theorem}\label{thm4}
Let $(\DD,g)$ be a sub-pseudo-Riemannian structure on a $(2n+1)$-dimensional manifold $M$. If $h=0$ and $\LL_{X_0}\omega=0$ then the projections of $g$ and $\omega$ to the quotient manifold $N$ determine the structure $(\DD,g)$ uniquely.
\end{theorem}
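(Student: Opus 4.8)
The plan is to follow the strategy of the proof of Theorem \ref{thm2}: pass to the (local) quotient $N=M/X_0$ and show that a suitably chosen frame of $\DD$ has structural functions that are entirely $\pi$-pullbacks of functions on $N$ built from the projections $\tilde g$ and $\tilde\omega$. The one genuinely new feature compared with dimension $3$ is that the $2$-form $\omega$ is now independent data rather than the volume form of $\tilde g$, so $\tilde\omega$ must be tracked alongside $\tilde g$, and the hypothesis $\LL_{X_0}\omega=0$ is precisely what makes $\tilde\omega$ well defined.

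First I would record the key local fact about lifts. Let $\pi\colon M\to N$ be the quotient map for the flow of $X_0$. Since $X_0$ is transverse to $\DD$, the restriction $d\pi|_\DD\colon\DD\to TN$ is a fibrewise isomorphism, so every vector field $\tilde X$ on $N$ has a unique lift to a section $X$ of $\DD$. Because $X$ is $\pi$-related to $\tilde X$ and $X_0$ is $\pi$-related to $0$, the bracket $[X_0,X]$ is $\pi$-related to $0$; as it also lies in $\DD$ (the flow of $X_0$ preserves $\DD$), it vanishes. Hence, for a lifted frame $(X_1,\dots,X_{2n})$ of $\DD$ obtained from a frame $(\tilde X_1,\dots,\tilde X_{2n})$ on $N$, one has $c_{0i}^j=0$ for all $i,j$ (and $c_{0i}^0=0$). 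Using $h=0$, the functions $g(X_i,X_j)$ are constant along the fibres, so the lift of a $\tilde g$-orthonormal frame is $g$-orthonormal with the same signature pattern $s_i$. Finally, $[X_i,X_j]\bmod\R X_0$ is the lift of $[\tilde X_i,\tilde X_j]$, so $c_{ij}^k=\pi^*\tilde c_{ij}^k$ for $k\ge 1$, while $c_{ij}^0=\omega(X_i,X_j)=\pi^*\big(\tilde\omega(\tilde X_i,\tilde X_j)\big)$, using $\LL_{X_0}\omega=0$ to descend $\omega$ to $\tilde\omega$. Thus every structural function of the full frame $(X_1,\dots,X_{2n},X_0)$ on $M$ is the $\pi$-pullback of a function on $N$ determined by $\tilde g$ and $\tilde\omega$, or is identically $0$.

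With this in hand, the uniqueness argument runs as in Theorem \ref{thm2}. Given two structures $(\DD_1,g_1)$, $(\DD_2,g_2)$ with $h_1=h_2=0$, $\LL_{X_0}\omega_1=\LL_{X_0}\omega_2=0$, and a diffeomorphism $\phi\colon N_1\to N_2$ with $\phi^*\tilde g_2=\tilde g_1$ and $\phi^*\tilde\omega_2=\tilde\omega_1$, choose a $\tilde g_2$-orthonormal frame on $N_2$ and pull it back by $\phi$ to a $\tilde g_1$-orthonormal frame on $N_1$; since $\phi$ also intertwines $\tilde\omega_1$ and $\tilde\omega_2$, the two frames on $N_1$, $N_2$ share all their structural functions together with all components $\tilde\omega(\tilde X_i,\tilde X_j)$. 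Lifting and adjoining the Reeb fields, the resulting frames on $M_1$ and $M_2$ have identical structural functions in fibration-adapted coordinates, so the theorem of E.\ Cartan on the equivalence of frames yields a local diffeomorphism $M_1\to M_2$ carrying one frame to the other. This map sends $\DD_1$ onto $\DD_2$, is an isometry of $g_1$ and $g_2$, and sends $X_0^{(1)}$ to $X_0^{(2)}$; hence it is a local equivalence of the two structures.

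I expect the point requiring the most care to be the claim that \emph{all} structural functions of the lifted frame, including the new ones $c_{ij}^0$ encoding $\omega$, descend to $N$, and that this genuinely suffices for the Cartan argument: one must check that the structure functions of the $M$-frame, being $\pi$-pullbacks, together with the fibre coordinate, coincide between $M_1$ and $M_2$ once the data on $N$ is matched, so that the equivalence problem on $M$ reduces to the one on $N$ already treated in Theorem \ref{thm2}. A secondary technical point is to confirm that the lifts may be taken positively oriented and that the normalised contact form $\alpha$, hence $\omega$, is preserved by the constructed diffeomorphism, which follows because $\alpha$ is reconstructed from $\DD$, the orientation, and $g$ via the normalisation \eqref{norm}.
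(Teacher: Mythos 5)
Your proposal is correct and follows essentially the same route as the paper: lift a $\tilde g$-orthonormal frame from $N=M/X_0$ to $\DD$, observe that $c_{0i}^j=0$ while $c_{ij}^k$ and $c_{ij}^0=\omega(X_i,X_j)$ descend to $N$ (the latter via $\LL_{X_0}\omega=0$), and conclude by Cartan's equivalence of frames as in Theorem \ref{thm2}. The paper's proof is just a terse version of this; your write-up supplies the details it leaves implicit.
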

\begin{proof}
The proof is a repetition of the proof in dimension 3. We shall consider an orthonormal frame $(\tilde X_1,\ldots, \tilde X_{2n})$ on $N$ and its lift $(X_1,\ldots, X_{2n})$ on $M$. Then $(X_1,\ldots, X_{2n})$ is an orthonormal frame of $\DD$ and it is easy to show that the corresponding structural functions are determined by the structural functions of the original frame on $N$ and by the projection of $\omega$.
\end{proof}

\begin{corollary}
If $h=0$ then the pullback to $\DD$ of the Riemann curvature tensor of the metric $\tilde g$ coincides with $R_\DD$.
\end{corollary}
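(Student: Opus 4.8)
The plan is to follow the proof of Corollary~\ref{cor1}, upgrading the dimension-3 Gauss curvature to the full Riemann tensor and using Theorem~\ref{thm3} in place of the dimension-3 curvature formulae. Since $h=0$, the ``Symmetric case'' discussion applies and the quotient $N=M/X_0$ carries the projected metric $\tilde g$. (The condition $\LL_{X_0}\omega=0$ also holds automatically, since $\LL_{X_0}\alpha=\iota_{X_0}d\alpha+d(\alpha(X_0))=0$ by Cartan's formula and hence $\LL_{X_0}\omega=0$; so Theorem~\ref{thm4} is in force, although only $\tilde g$ is needed below.) First I would fix an orthonormal frame $(\tilde X_1,\dots,\tilde X_{2n})$ on $N$ and take its horizontal lift $(X_1,\dots,X_{2n})$ to $\DD$, exactly as in the proofs of Theorems~\ref{thm2} and~\ref{thm4}. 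For this frame $c_{0i}^k=0$ for all $i,k$, while the $c_{ij}^k$ with $k\ge 1$ and $c_{ij}^0=\omega(X_i,X_j)$ are constant along $X_0$ and equal the pullbacks of the corresponding objects on $N$.

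The heart of the argument is the pointwise identity $\kappa_\DD(X_i,X_j)=\kappa^{\tilde g}(\tilde X_i,\tilde X_j)$, where the right-hand side is the sectional curvature of $\tilde g$ on the plane $\spn\{\tilde X_i,\tilde X_j\}$, pulled back to $\DD$. I would obtain it by substituting $c_{0i}^k=0$ into \eqref{kappagen}: the last term $\tfrac12 c_{ij}^0(c_{0i}^js_j-c_{0j}^is_i)$ drops out, and the surviving expression is precisely the orthonormal-frame formula for the sectional curvature of a pseudo-Riemannian metric with structure functions $c_{ij}^k$ — the higher-dimensional version of the reduction used for $n=1$ in Corollary~\ref{cor1} — and, since the $c_{ij}^k$ are pulled back from $N$, it computes $\kappa^{\tilde g}$. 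Equivalently, and more transparently, since $\nc_{X_0}X_0=0$ the fibres of $M\to N$ are geodesics, hence totally geodesic, and together with $G^c(X_0,\DD)=0$ and the $X_0$-invariance $h=0$ of $G^c|_\DD=g$ this exhibits $\pi\colon M\to N$ as a pseudo-Riemannian submersion with horizontal distribution $\DD$, totally geodesic fibres, and base $(N,\tilde g)$. Its O'Neill tensor is $A_{X_i}X_j=\tfrac12 c_{ij}^0X_0$, so $G^c(A_{X_i}X_j,A_{X_i}X_j)=\tfrac{c}{4}\omega(X_i,X_j)^2$; O'Neill's curvature formula then gives $\kappa^c_\DD(X_i,X_j)=\kappa^{\tilde g}(\tilde X_i,\tilde X_j)-3G^c(A_{X_i}X_j,A_{X_i}X_j)$, and comparison with \eqref{kappacgen}, whose middle term vanishes because $h^\sharp=0$, again yields the identity.

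To finish I would argue that the identity holds for every orthonormal pair of $\DD$, not merely for the chosen one: at any point $x$, $\pi_\ast$ maps $\DD_x$ isometrically onto $T_{\pi(x)}N$, so every orthonormal basis of $\DD_x$ is the value at $x$ of the horizontal lift of some local orthonormal frame on $N$, and the previous step applies to it. Hence $R_\DD$ — which by the remark following Theorem~\ref{thm3} is the unique $(3,1)$-tensor on $\DD$ with the algebraic curvature symmetries whose sectional curvatures are $\kappa_\DD(X_i,X_j)$ — and the pullback to $\DD$ of the Riemann tensor of $\tilde g$ are two algebraic curvature tensors on $\DD$ with the same sectional curvatures, so by the lemma cited after Theorem~\ref{thm3} (pure algebra, independent of signature) they coincide.

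The one genuinely substantial point is the middle paragraph: carrying out the direct verification that \eqref{kappagen} with $c_{0i}^k=0$ is the orthonormal-frame expression for the sectional curvature of $\tilde g$ is a routine but lengthy computation, essentially a rerun of the proof of Theorem~\ref{thm3}, while the submersion route trades it for an appeal to O'Neill's formula, whose validity in the indefinite setting should be noted to be guaranteed here by the non-degeneracy of the fibres (because $c\ne 0$). Everything else — the frame construction, the structure of its structural functions, and the final tensorial comparison — is bookkeeping already performed in dimension~3.
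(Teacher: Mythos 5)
Your proposal is correct and takes essentially the same route as the paper: lift an orthonormal frame from $N$, observe that $c_{0i}^k=0$ removes the last term of \eqref{kappagen}, and identify the surviving expression with the sectional curvature of $\tilde g$ computed from the structural functions of $(\tilde X_1,\dots,\tilde X_{2n})$. The additional care you take in passing from sectional curvatures of frame planes to equality of the full tensors, and the alternative verification via O'Neill's submersion formula, are sound refinements rather than a different argument.
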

\begin{proof}
We use an orthonormal frame $(X_1,\ldots ,X_{2n})$ of $\DD$ defined by the lift of $(\tilde{X}_1,\ldots ,\tilde{X}_{2n})$ as in the proof of Theorem \ref{thm4} above. Then, there is no term involving $c_{0k}^l$ in $\kappa _{D}(X_i,X_j)$ and the formula reduces to the sectional curvature of $\tilde{g}$ computed in terms of the structural functions of $(\tilde{X}_1,\ldots,\tilde{X}_{2n})$.
\end{proof}

\section{Contact sub-pseudo-Riemannian symmetries}\label{sec5}

\paragraph{Sub-pseudo-Riemannian isometries.}
Suppose that $(M,\DD,g)$ is a contact $(2n+1)$-dimensional sub-pseudo-Riemannian manifold. A diffeomorphism $f\colon M\to M$ is an isometry (or a sub-pseudo isometry) of $(M,\DD,g)$ if (i) $d_{q}f(\DD_q)= \DD_{f(q)}$ for every $q\in M$, (ii) $d_q f\colon\DD_{q}\to \DD_{f(q)}$ is a linear isometry of $g$. Of course, the set of all isometries of $(M,\DD,g)$ forms a group.

Suppose that, as in the previous section, $\DD$ is endowed with an orientation. Let $(X_1,\ldots,X_{2n})$ be an orthonormal positively oriented frame of $(\DD,g)$, and let $\alpha$ be the contact one-form normalised as in \eqref{norm}. By $X_0$ we denote the Reeb vector field. Finally let $(\alpha^1,\ldots,\alpha^{2n},\alpha)$ be a coframe dual to $(X_1,\ldots,X_{2n},X_0)$. Clearly 
\begin{equation}\label{system}
\begin{split}
f^*\alpha^i&=\sum_{j=1}^{2n}a_j^i\alpha^j+a_i\alpha \\
f^*\alpha&=\lambda\alpha
\end{split}
\end{equation}
for some smooth functions $a_j^i$ and $\lambda$. The normalisation condition reads 
$$
(d\alpha|_\DD)^{\wedge n}=(-1)^n\alpha^1\wedge\ldots\wedge\alpha^{2n}|_\DD
$$
which gives $\lambda=1$ and consequently $f^*\alpha=\alpha$. Using this, it is easy to show that $\alpha(f_*X_0)=1$ and $d\alpha(f_*X_0,\cdot)=0$ proving that $f_{\ast }X_0=X_0$ (note that $d\alpha$ has $1$-dimensional kernel). Consequently, if we extend $g$ to the pseudo-Riemannian metric $G=G^{1}$ by setting $G(X_{0},X_{0})=1$, then any sub-pseudo-Riemannian isometry automatically becomes an isometry of the metric $G$. This observation is independent of the choice of an orientation on $\DD$. In this way (cf. \cite{Ko}) we are led to the following

\begin{theorem}\label{thmsym1}
The set $\mathfrak{I}(M,\DD,g)$ of all isometries of $(M,\DD,g)$ is a Lie group with respect to the open-compact topology. Moreover, in the sub-Riemannian case the isotropic subgroup $\mathfrak{I}_{q}(M,\DD,g)$ of any point $q\in M$ is compact.
\end{theorem}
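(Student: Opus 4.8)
The plan is to exploit the identification, set up just before the statement, of $\mathfrak{I}(M,\DD,g)$ with a subgroup of the isometry group of the extended metric $G=G^{1}$. On the one hand, the discussion preceding the theorem shows that every $f\in\mathfrak{I}(M,\DD,g)$ satisfies $f^{*}\alpha=\alpha$, hence $f_{*}X_{0}=X_{0}$, and hence $f$ is an isometry of $G$. On the other hand, $\alpha$ is exactly the $G$-dual one-form of $X_{0}$, because $G(X_{0},X_{0})=1=\alpha(X_{0})$ and $G(X_{0},\DD)=0=\alpha(\DD)$; therefore an isometry $\phi$ of $G$ satisfies $\phi_{*}X_{0}=X_{0}$ if and only if $\phi^{*}\alpha=\alpha$, and in that case $\phi$ preserves $\DD=X_{0}^{\perp_{G}}$ and restricts to a $g$-isometry of $\DD$, i.e.\ $\phi\in\mathfrak{I}(M,\DD,g)$. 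Consequently
$$
\mathfrak{I}(M,\DD,g)=\{\phi\in\mathrm{Isom}(M,G):\phi_{*}X_{0}=X_{0}\}=\{\phi\in\mathrm{Isom}(M,G):\phi^{*}\alpha=\alpha\}.
$$

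The next step is to invoke that the full isometry group $\mathrm{Isom}(M,G)$ of the pseudo-Riemannian manifold $(M,G)$ is a Lie group in the compact-open topology acting smoothly on $M$; in the Riemannian case this is the Myers--Steenrod theorem, and in general it follows from the Lie-theoretic description of automorphism groups of geometric structures of finite type (the isometry group sits as a closed subgroup of the automorphism group of the Levi-Civita connection, which is a Lie transformation group), see \cite{Ko}. Granting this, since on the isometry group compact-open convergence implies $C^{\infty}$-convergence on compact sets, the condition $\phi_{*}X_{0}=X_{0}$ is closed, so by the displayed identity $\mathfrak{I}(M,\DD,g)$ is a closed subgroup of $\mathrm{Isom}(M,G)$. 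By Cartan's closed-subgroup theorem it is an embedded Lie subgroup, and the subspace topology it inherits from the compact-open topology on $\mathrm{Isom}(M,G)$ is the compact-open topology on $\mathfrak{I}(M,\DD,g)$.

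For the last assertion, in the sub-Riemannian case $g$ is positive-definite, so $G=G^{1}$ is an honest Riemannian metric on $M$. The isometry group of a Riemannian manifold acts properly, hence every isotropy subgroup $\mathrm{Isom}_{q}(M,G)$ is compact (alternatively, the faithful isotropy representation $\phi\mapsto d_{q}\phi$ realises $\mathrm{Isom}_{q}(M,G)$ as a closed subgroup of the compact group $O(T_{q}M,G_{q})\cong O(2n+1)$). Then $\mathfrak{I}_{q}(M,\DD,g)=\mathfrak{I}(M,\DD,g)\cap\mathrm{Isom}_{q}(M,G)$ is a closed subgroup of a compact group, hence compact.

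The only non-formal ingredient, and the step I expect to be the main obstacle, is the claim that $\mathrm{Isom}(M,G)$ is a Lie group with the compact-open topology when $G$ is indefinite: this is not covered by the original Myers--Steenrod argument, which uses positive-definiteness crucially, and must instead be taken from the general results on automorphism groups of $G$-structures of finite type, respectively of affine connections, as in \cite{Ko}. Once this input is in place, the remainder is a routine combination of Cartan's closed-subgroup theorem with properness of Riemannian isometric actions.
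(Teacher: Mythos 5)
Your proposal is correct and follows essentially the same route as the paper: the preceding discussion already shows every sub-pseudo-Riemannian isometry preserves $\alpha$, hence $X_0$, hence is an isometry of $G=G^1$, and the paper's proof is precisely the observation that $\mathfrak{I}(M,\DD,g)$ is then a closed subgroup of the Lie group $\mathrm{Isom}(M,G)$ (citing \cite{Ko} for the pseudo-Riemannian case), with compactness of the isotropy group in the Riemannian case following from compactness of $O(2n+1)$. Your write-up merely makes explicit the reverse identification of $\mathfrak{I}(M,\DD,g)$ with the stabiliser of $X_0$ in $\mathrm{Isom}(M,G)$ and the closedness of that condition, which is a welcome elaboration rather than a different argument.
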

\begin{proof}
Indeed, $\mathfrak{I}(M,\DD,g)$ is a closed subgroup in the group of isometries of the pseudo-Riemannian manifold $(M,G)$.
\end{proof}

By the way we obtain
\begin{proposition}\label{propsym1}
Any contact sub-pseudo-Riemannian isometry $f$ is uniquely determined by two values: $f(q_0)$ and $d_{q_0}f$, where $q_0\in M$ is an arbitrarily fixed point.
\end{proposition}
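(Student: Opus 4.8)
The plan is to deduce Proposition \ref{propsym1} from Theorem \ref{thmsym1} together with the corresponding uniqueness statement for isometries of pseudo-Riemannian manifolds. First I would invoke the construction already carried out above: extend $g$ to the canonical metric $G=G^1$ on $M$ by declaring $G(X_0,X_0)=1$ and $G(X_0,\DD)=0$. As shown in the paragraph preceding Theorem \ref{thmsym1}, any sub-pseudo-Riemannian isometry $f$ of $(M,\DD,g)$ satisfies $f^*\alpha=\alpha$ and $f_*X_0=X_0$, hence preserves both $\DD$ and its $G$-orthogonal complement $\spn\{X_0\}$, and acts isometrically on each; therefore $f$ is an isometry of the pseudo-Riemannian manifold $(M,G)$.

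Next I would recall the classical rigidity fact for pseudo-Riemannian isometries: a connected pseudo-Riemannian manifold is, via the exponential map, determined near a point by its metric, and an isometry $F$ commutes with $\exp$; consequently if $F(q_0)=q_0'$ and $d_{q_0}F$ is prescribed, then $F$ is uniquely determined on the connected component of $q_0$ by the identity $F(\exp_{q_0}(v))=\exp_{q_0'}(d_{q_0}F(v))$ extended along chains of geodesics. (If $M$ need not be connected one argues component by component, but the paper implicitly works with connected $M$.) Since $f$ is such an isometry of $(M,G)$, the pair $(f(q_0),d_{q_0}f)$ determines $f$ completely.

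Finally I would note the converse direction is immediate but not needed: we only claim uniqueness, i.e. that two isometries $f_1,f_2$ of $(M,\DD,g)$ with $f_1(q_0)=f_2(q_0)$ and $d_{q_0}f_1=d_{q_0}f_2$ must coincide, which follows because $f_2^{-1}\circ f_1$ is a $G$-isometry fixing $q_0$ with trivial differential there, hence is the identity on the component of $q_0$, hence on $M$.

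The step requiring the most care is the passage to $(M,G)$: one must be sure that \emph{every} hypothesis of the sub-pseudo-Riemannian isometry is used so that $f$ genuinely preserves $G$ and not merely $G|_\DD$. This is exactly the content of the computation $\alpha(f_*X_0)=1$, $d\alpha(f_*X_0,\cdot)=0$ recalled above, which rests on the normalisation \eqref{norm} forcing $\lambda=1$ in \eqref{system}; once $f_*X_0=X_0$ is in hand, the extension to a $G$-isometry is automatic and the classical uniqueness theorem applies. I do not anticipate any genuine obstacle beyond citing this standard rigidity result (e.g. via $\exp$), since all the structural work has been done in the discussion preceding Theorem \ref{thmsym1}.
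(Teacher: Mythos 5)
Your proof is correct and follows essentially the same route as the paper: the authors likewise reduce the claim to the extended metric $G=G^1$ (using the observation, established just before Theorem \ref{thmsym1}, that every sub-pseudo-Riemannian isometry is a $G$-isometry) and then cite the classical fact that a pseudo-Riemannian isometry of a connected manifold is determined by its value and differential at one point. Your write-up merely makes explicit the exponential-map argument that the paper leaves implicit.
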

\begin{proof}
The result follows from known properties of isometries in the pseudo-Riemannian geometry.
\end{proof}

Fix an isometry $f$ of $(M,\DD,g)$. Examining \eqref{system} in more detail it is easy to check that $a^{1}=\ldots=a^{2n}=0$. Moreover it is clear that $(a_j^i)_{i,j=1,\ldots,2n}\in O(l,2n-l)$, where $l$ is the index of $g$. It follows that sub-pseudo-Riemannian structures $(\DD,g)$ on $M$ are in a one-to-one correspondence with $\GG$-structures on $M$ where 
\begin{equation}\label{group1}
\GG=\left\{
\left( 
\begin{array}{cc}
A & 0\\ 
0 & 1
\end{array}
\right) \ |\ A\in O(l,2n-l)\right\}.
\end{equation}
Indeed, any such $\GG$-structure can be realised as the bundle of horizontal orthonormal frames $O_{\DD,g}(M)\to M$ with 
$$
O_{\DD,g}(M)=\left\{ (q;v_1,\ldots,v_{2n},X_{0}(q))\ |\ v_1,\ldots,v_{2n}\text{ is a }g\text{-orthonormal basis of }\DD_q\right\}.
$$
The component of identity $\mathfrak{I}_{0}(M,\DD,g)$ in the group of isometries $\mathfrak{I}(M,\DD,g)$ can be identified now with the group of all fiber preserving mappings $F\colon O_{\DD,g}(M)\to O_{\DD,g}(M)$ such that $F^*\theta =\theta$, where $\theta$ stands for the restriction to $O_{\DD,g}(M)$ of the canonical form on the bundle of linear frames on $M$.

Fix an element $(q;v_1,\ldots ,v_{2n},X_{0}(q))\in O_{\DD,g}(M)$. Thanks to Proposition \ref{propsym1} we have the embedding 
\begin{equation}\label{embedd}
\mathfrak{I}_0(M,\DD,g)\rightarrow O_{\DD,g}(M),\quad f\mapsto (f(q);d_qf(v_1),\ldots,d_qf(v_{2n}),d_qf(X_0(q))).
\end{equation}
Note that the embedding can be used to state another proof of Theorem \ref{thmsym1}.

\paragraph{Symplectic structure.}
If $f$ is an isometry of $(M,\DD,g)$ then, clearly, $f^{\ast }d\alpha =d\alpha$. In particular $f^{\ast }\omega =\omega $, where $\omega =-d\alpha |_{\DD}$, defined by \eqref{omega}, may be regarded as a symplectic form on $\DD_q$ for every $q\in M$. This leads to the following idea. If the two structures on $\DD$: $g$ and $\omega $ are compatible, meaning that there exists a symplectic basis for $d\alpha |_{\DD_{q}} $ which is orthonormal for $g$, then $(a_j^i)_{i,j=1,\ldots,2n}\in Sp(2n)\cap O(l,2n-l)$, and such sub-pseudo-Riemannian structures $(\DD,g)$ on $M$ may be viewed as reductions of $\GG$-structures with $\GG$ defined in \eqref{group1} to $\HH$-structures on $M$ with 
\begin{equation}\label{group2}
\HH=\left\{
\left( 
\begin{array}{cc}
A & 0\\ 
0 & 1
\end{array}
\right) \ |\ A\in Sp(2n)\cap O(l,2n-l)\right\}.
\end{equation}
This is the case e.g.\ for the Heisenberg group with the left-invariant sub-pseudo-Riemannian structure. More precisely, the natural left-invariant distribution $\DD$ on the Heisenberg group is, in the exponential coordinates $q=(x_1,\ldots ,x_n,y_1,\ldots ,y_n,z)$, spanned by the following fields 
\begin{equation}\label{vectfields}
X_i=\frac{\partial}{\partial x_i}-\frac{1}{2}y_i\frac{\partial}{\partial z},\quad Y_i=\frac{\partial}{\partial y_i}+\frac{1}{2}x_i\frac{\partial }{\partial z},\quad i=1,\ldots ,n.
\end{equation}
The natural left-invariant metric $g$ on $\DD$ is defined by declaring the fields $X_i$ and $Y_i$ to be orthonormal with $g(X_i,X_i)=s_i$, $g(Y_i,Y_i)=t_i$ where $s_i,t_i\in\{-1,1\}$ depending on the signature of $g$, $i=1,\ldots ,n$. Here 
$$
\alpha =dz+\frac{1}{2}\sum_{i=1}^{n}(y_idx_i-x_idy_i),
$$
so the symplectic form on $\DD_{q}$ is 
$$
\omega_{q}=-d\alpha |_{\DD_{q}}=\sum_{i=1}^{n}dx_{i}\wedge dy_{i},
$$
and $\omega (X_i,Y_j)=\delta_{ij}$, $\omega(X_i,X_j)=\omega(Y_i,Y_j)=0$, $i,j=1,\ldots ,n$.

We shall restrict now to the case of positively definite $g$ (i.e.\ $s_i=t_i=1$, $i=1,\ldots,n$) and leave other signatures to future works. If $g $ and $\omega$ are compatible then the operator $J\colon\DD\to\DD$ defined by 
$$
\omega(X,Y)=g(J X,Y)
$$
for all $X,Y\in\DD$, is a complex structure in every distribution plane $\DD_{q}$, $q\in M$, and the group $\HH$ from \eqref{group2} is isomorphic to the unitary group $U(n)$. We get the following
\begin{theorem}\label{thmsym2}
Let $(M,\DD,g)$ be a contact sub-Riemannian manifold of dimension $2n+1$. Then $\dim\mathfrak{I}(M,\DD,g)\leq (n+1)^2$.
\end{theorem}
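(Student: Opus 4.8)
The plan is to realize $\mathfrak{I}(M,\DD,g)$ as a closed subgroup of the isometry group of the Riemannian manifold $(M,G)$ with $G=G^1$, just as in the proof of Theorem \ref{thmsym1}, and then bound its dimension by the dimension of the frame bundle that carries it. More precisely, I would first invoke the embedding \eqref{embedd} of $\mathfrak{I}_0(M,\DD,g)$ into the reduced bundle of horizontal orthonormal frames. Under the compatibility hypothesis the structure group of this bundle is $\HH\cong U(n)$ rather than $O(2n)$, so the total space of the reduced frame bundle $O_{\DD,g}^{\HH}(M)$ has dimension $(2n+1)+\dim U(n)=(2n+1)+n^2$. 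Since the map \eqref{embedd} is an injective immersion into this bundle, it already gives $\dim\mathfrak{I}_0(M,\DD,g)\le (2n+1)+n^2$, but this is weaker than the claimed $(n+1)^2=n^2+2n+1$; in fact it is exactly equal, so one could stop here once one checks that \eqref{embedd} lands in the $\HH$-bundle and not merely the $\GG$-bundle.

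The key observation making this work is that an isometry $f$ of $(M,\DD,g)$ satisfies $f^*d\alpha=d\alpha$ (because $f^*\alpha=\alpha$, as shown in the text preceding Theorem \ref{thmsym1}), hence $f^*\omega=\omega$; combined with $d_qf$ being a $g$-isometry on $\DD_q$, the differential $d_qf$ preserves both $g$ and $\omega$ on the distribution, so it preserves the complex structure $J$ and therefore lies in $U(n)$ in any unitary frame. Thus the image of \eqref{embedd} lies in $O_{\DD,g}^{\HH}(M)$, a bundle of dimension $(2n+1)+n^2=(n+1)^2$. Since a connected Lie group embedded as a submanifold of a manifold of dimension $(n+1)^2$ has dimension at most $(n+1)^2$, we get $\dim\mathfrak{I}_0(M,\DD,g)\le (n+1)^2$, and since $\mathfrak{I}(M,\DD,g)$ and its identity component have the same dimension, the theorem follows.

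The step I expect to require the most care is verifying that the map \eqref{embedd} really is an immersion (equivalently, that its image is a submanifold of the expected dimension), so that the dimension count is valid; this is where Proposition \ref{propsym1} is essential, since it guarantees that an isometry is determined by its $1$-jet at a point, making \eqref{embedd} injective, and the standard theory of isometry pseudogroups (via the fact that $\mathfrak{I}_0$ acts freely on the frame bundle through fiber-preserving maps pulling back the canonical form) upgrades injectivity to the embedding property. One should also remark that no completeness assumption is needed: the bound is purely local-to-global via the frame bundle, exactly as in the classical Myers–Steenrod argument that $\dim\mathrm{Isom}(M^m)\le \binom{m+1}{2}$, here refined by the reduction to $U(n)$.
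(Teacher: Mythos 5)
Your frame-bundle dimension count is sound as far as it goes, and in the compatible case it is arguably more self-contained than the paper's argument, which instead quotes the known isometry group $\R^{2n+1}\ltimes U(n)$ of the flat Heisenberg model and deduces the bound for all compatible structures from the maximal symmetry of the flat one. However, there is a genuine gap: the theorem carries no compatibility hypothesis, while your proof explicitly invokes ``the compatibility hypothesis'' and argues ``in any unitary frame''. Compatibility of $g$ and $\omega=-d\alpha|_\DD$ is a nontrivial extra condition. The operator $J$ defined by $\omega(X,Y)=g(JX,Y)$ is always $g$-skew-adjoint and invertible, but it satisfies $J^2=-\Id$ only when all the fundamental frequencies $b_i$ (where $\pm i b_i(q)$ are the eigenvalues of $J_q$) are equal to $1$; the normalisation \eqref{norm} fixes only the determinant of $J$, not the individual $b_i$. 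In general no unitary frame exists, the reduced bundle with structure group $\HH\cong U(n)$ is not available, and your argument does not apply.

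The repair stays within your framework but must be stated. At a fixed $q$, the isotropy group $\mathfrak{I}_q(M,\DD,g)$ injects, via $f\mapsto d_qf|_{\DD_q}$ (injective by Proposition \ref{propsym1} together with $d_qf(X_0(q))=X_0(q)$), into the stabiliser of the pair $(g_q,\omega_q)$, i.e.\ into $O(2n)\cap Sp(\omega_q)$. The spectral decomposition of $J_q$ splits $\DD_q$ into the eigenspace blocks of the frequencies, and this stabiliser is isomorphic to $U(n_1)\times\cdots\times U(n_m)$ with $\sum_j n_j=n$, hence has dimension $\sum_j n_j^2\le n^2$; adding the orbit dimension $\le 2n+1$ gives $\dim\mathfrak{I}(M,\DD,g)\le (n+1)^2$ in full generality. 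This is exactly the point of the paper's proof: it performs the further reduction of the frame bundle governed by the fundamental frequencies, computes $2n+1+\sum_j n_j^2<(n+1)^2$ when the frequencies are constant and not all equal, and invokes Kruglikov's finite-dimensionality result \cite{Kr} for non-constant frequencies. Your pointwise isotropy count would in fact sidestep the constancy issue, but as written the non-compatible case is simply missing.
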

\begin{proof}
Indeed, if $(\DD,g)$ is the left-invariant sub-Riemannian structure on the Heisenberg group as above, then it is known that $\mathfrak{I}(\R^{2n+1},\DD,g)=\R^{2n+1}\ltimes U(n)$ (see e.g.\ \cite{T}). In particular $\dim\mathfrak{I}(\R^{2n+1},\DD,g)=2n+1+n^{2}=(n+1)^{2}$ and it follows that $\dim \mathfrak{I}(M,\DD,g)\leq (n+1)^{2}$ for all contact sub-Riemannian structures such that $g$ and $\omega$ are compatible. In the general case, when $g$ and $\omega$ are not compatible, the bundle $O_{\DD,g}(M)$ reduces to the bundle of orthonormal frames such that $\omega=\sum_{i=1}^nb_i\alpha^i\wedge\alpha^{i+n}|_\DD$, where $b_i$ are certain smooth functions on $M$, called fundamental frequencies in \cite{A} ($\pm i b_i(q)$ are eigenvalues of $J_q$). Let us assume first that $b_i=\mathrm{const}$, $i=1,\ldots,n$ and let $m$ be a number of different values of $b_i$'s. Then we can decompose $\DD=\DD_1\oplus\ldots\oplus\DD_m$ where $\DD_j$, $j=1,\ldots,m$, are sub-distributions of $\DD$ corresponding to different values of frequencies. It follows that the reduced frame bundle is a principal bundle with the group $U(n_1)\oplus\cdots\oplus U(n_m)$ where $\sum_{j=1}^m n_j=n$ and $n_i=\frac{1}{2}\dim\DD_j$. Now, it is easy to see (e.g computing the Tanaka prolongation) that $\dim \mathfrak{I}(M,\DD,g)=2n+1+\sum_{j=1}^mn_j^2<(n+1)^2$. The case of non-constant $b_i $ follows from \cite{Kr}.
\end{proof}

At the end we state a theorem which in the Riemannian signature is a corollary of the classical result of Ebin \cite{E} and in the general signature is a corollary of a recent result of Mounoud \cite{Mo}.
\begin{theorem}\label{thmsym3}
Let $(M,\DD)$ be a compact contact manifold. Then for a generic pseudo-Riemannian metric $g$ on $\DD$ the group of isometries $\mathfrak{I}(M,\DD,g)$ is trivial.
\end{theorem}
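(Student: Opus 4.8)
The plan is to derive the statement from the reduction established earlier together with the genericity results of Ebin \cite{E} (Riemannian signature) and Mounoud \cite{Mo} (arbitrary signature). The first step is the observation made before Theorem~\ref{thmsym1}: every $f\in\mathfrak{I}(M,\DD,g)$ is an isometry of the canonical extension $G=G^{1}_{g}$ fixing the Reeb field, so $\mathfrak{I}(M,\DD,g)$ is a closed subgroup of $\mathrm{Isom}(M,G)$ and, via the embedding \eqref{embedd}, sits inside the frame bundle $O_{\DD,g}(M)\to M$ whose fibre $O(l,2n-l)$ is a closed subgroup of $GL(2n,\R)$. Since $M$ is compact, every family of isometries of bounded $C^{1}$-size is relatively compact in $O_{\DD,g}(M)$. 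Hence it suffices to prove that for $g$ in a residual subset of the open cone $\mathcal{M}(\DD)\subset\Gamma(S^{2}\DD^{*})$ of metrics of the prescribed signature, the extension $G^{1}_{g}$ carries no nontrivial isometry preserving $\DD$.

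Next I would run the Baire-category argument of \cite{E,Mo} inside $\mathcal{M}(\DD)$. For each $k$ let $\mathcal{B}_{k}\subset\mathcal{M}(\DD)$ be the set of metrics $g$ for which $(M,\DD,g)$ carries a nontrivial isometry, or a nontrivial Killing field, of $C^{1}$-size between $1/k$ and $k$; using \eqref{embedd} and a standard compactness argument one checks that $\mathcal{B}_{k}$ is closed and that $\mathfrak{I}(M,\DD,g)\neq\{\Id\}$ forces $g\in\bigcup_{k}\mathcal{B}_{k}$ (the lower bound $1/k$ only serves to separate isometries from the identity, the limiting case of small isometries being subsumed in the Killing-field case). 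The heart of the matter is to show that each $\mathcal{B}_{k}$ has empty interior. Given $g\in\mathcal{B}_{k}$ with witness $f\neq\Id$, there is a point $q$ with $f(q)\neq q$, or with $d_{q}f\neq\Id$; one then perturbs $g$ \emph{within} $\mathcal{M}(\DD)$ by a variation supported in a small ball around $q$ that moves some intrinsic invariant of $(\DD,g)$ at $q$ — for instance a component of $R_{\DD}$, of $h$ or of $\omega$, all of which are freely deformable by variations internal to $(\DD,g)$ because $\dim\DD=2n\geq 2$ — while leaving that invariant unchanged near the remaining points of the $f$-orbit of the ball; the resulting $g'$ is arbitrarily close to $g$ and satisfies $f^{*}g'\neq g'$. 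Carrying this out over a countable dense family of candidate witnesses shows that $\mathcal{M}(\DD)\setminus\mathcal{B}_{k}$ is open and dense, so their intersection is residual and the theorem follows.

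The main obstacle is exactly the clause ``unchanged near the remaining points of the $f$-orbit of the ball''. If $f$ generates a non-closed subgroup of the isometry group — a subtorus in the Riemannian case, and possibly a non-precompact subgroup in the indefinite case — the orbit of $q$ need not be discrete, and a single localised perturbation may survive $f$ unless it is chosen equivariantly along the orbit closure. This is precisely the point resolved in \cite{E} by stratifying the compact isometry group by orbit type, and in \cite{Mo} by the structure theory of isometry groups of compact pseudo-Riemannian manifolds. What remains to be checked for the present application is only that those arguments, originally run for unconstrained perturbations of metrics on $M$, survive when the perturbations are restricted to the subfamily $\{G^{1}_{g}\}$ — equivalently, to perturbations of $g$ on $\DD$ together with the induced rescaling of $\alpha$ in the normalisation \eqref{norm}; since the relevant invariants are already deformed non-trivially by such internal perturbations, this transfer is routine, and one obtains the asserted genericity.
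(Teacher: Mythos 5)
The paper offers no argument for this theorem beyond the sentence preceding it: it is stated as a corollary of Ebin \cite{E} in the Riemannian signature and of Mounoud \cite{Mo} in the indefinite one. Your reconstruction of the Baire-category argument is therefore in the same spirit as what the authors intend, and you have correctly isolated the two points on which the deduction actually hinges: the closedness/properness input coming from the reduction to the extended metric $G^{1}$ and the embedding \eqref{embedd}, and the fact that the perturbations of \cite{E,Mo} must be taken \emph{inside} the constrained family, i.e.\ must come from perturbations of $g$ on $\DD$ alone. That second point is the real content of calling the theorem a ``corollary'': the residual set of \cite{E,Mo} lives in the space of all metrics on $M$, and its preimage under the map $g\mapsto G^{1}_{g}$, whose image is nowhere dense, carries no a priori Baire information, so one genuinely has to re-enter the proofs. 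You assert this transfer is routine; it is plausible (every candidate isometry preserves $\DD$, so for $f\neq\Id$ there is always a section of $S^{2}\DD^{*}$ not fixed by $f^{*}$), but as written it is an assertion rather than a verification.

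The one step of your sketch that would fail if taken literally is the empty-interior argument for $\mathcal{B}_{k}$. To exhibit $g'\notin\mathcal{B}_{k}$ near $g$ you must produce a metric admitting \emph{no} isometry of the prescribed size, not merely one for which a given witness $f$ ceases to be an isometry; perturbing ``over a countable dense family of candidate witnesses'' does not obviously terminate in such a $g'$, since each perturbation can create new isometries and the witnesses for the perturbed metrics need not be among those for $g$. This is precisely what the slice theorem and orbit-type stratification in \cite{E}, and the structure theory of isometry groups in \cite{Mo}, are designed to handle; you acknowledge this but defer it, together with the transfer to the constrained family, to the references. Since the paper does the same (and less), your proposal is a reasonable expansion of its one-line justification rather than a complete independent proof.
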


\section{Appendix: Isometries in dimension $5$}

In this appendix we will compute explicitly the group of isometries for structures $(\R^5,\DD,g)$ defined by vector fields \eqref{vectfields} in dimension $5$, where $(X_1,Y_1,X_2,Y_2)$ is a $g$-orthonormal basis of $\DD$. We consider three cases: (1) $g$ is Riemannian, (2) $g(X_1,X_1)=-1$, $g(Y_1,Y_1)=g(X_2,X_2)=g(Y_2,Y_2)=1$, (3) $g(X_1,X_1)=g(X_2,X_2)=-1$, $g(Y_1,Y_1)=g(Y_2,Y_2)=1$. In all cases the metric structure is compatible with the symplectic structure as it is explained in Section \ref{sec5}. Therefore the embedding \eqref{embedd} allows to compute the corresponding isometry groups in the explicit form. The three structures are left invariant, so the corresponding group of isometries contains $5$-dimensional subgroup of left translations. In the exponential coordinates it can be represented as follows. Let $Z=\frac{\partial}{\partial z}=[X_i,Y_i]$ denote the Reeb field. The Baker-Campbell-Hausdorff formula
\begin{align*}
&\exp(x_1X_1+y_1Y_1+x_2X_2+y_2Y_2+zZ)\cdot\exp(x_1'X_1+y_1'Y_1+x_2'X_2+y_2'Y_2+z'Z)=\\
&\qquad \exp((x_1+x_1')X_1+(y_1+y_1')Y_1+(x_2+x_2')X_2+(y_2+y_2')Y_2\\
&\qquad +\left(z+z'+\tfrac{1}{2}(x_1y_1'-y_1x_1'+x_2y_2'-y_2x_2')\right)Z)
\end{align*}
gives that the isometries coming from the left translations can be written as
$$
(x_1,y_1,x_2,y_2,z)\longmapsto(x_1+t_1,y_1+t_2,x_2+t_3,y_2+t_4,z+t_5+\tfrac{1}{2}(x_1t_2-y_1t_1+x_2t_4-y_2t_3)),
$$
where $(t_1,t_2,t_3,t_4,t_5)\in\R^5$.

%
%

In case (1) the structure group $\HH$ in \eqref{group2} is the unitary group $Sp(4)\cap O(4)\simeq U(2)$ whose dimension is equal to $4$. Using suitable representation of $U(2)$ as a subgroup of $GL(4,\R)$, every $\sigma \in U(2)$ induces an isometry $(x_1,y_1,x_2,y_2,z)\longmapsto (\sigma (x_1,y_1,x_2,y_2),z)$ of the Heisenberg group. In this way we obtain the following $4$-parameter family of isometries:
\begin{align*}
&(x_1,y_1,x_2,y_2,z)\longmapsto (x_2\cos\theta_1-y_2\sin\theta_1,x_2\sin\theta_1+y_2\cos\theta_1,x_1,y_1,z),\\
&(x_1,y_1,x_2,y_2,z)\longmapsto (x_1\cos\theta_2-y_1\sin\theta_2,x_1\sin\theta_2+y_1\cos\theta_2,x_2,y_2,z),\\
&(x_1,y_1,x_2,y_2,z)\longmapsto (x_1,y_1,x_2\cos\theta_3-y_2\sin\theta_3,x_2\sin\theta_3+y_2\cos\theta_3,z),\\
&(x_1,y_1,x_2,y_2,z)\longmapsto (x_2,y_2,x_1\cos\theta_4-y_1\sin\theta_4,x_2\sin\theta_4+y_2\cos\theta_4,z).
\end{align*}
Thus, in this case $\dim\mathfrak{I}(\R^5,\DD,g)=9$.

Next, in the case (2) the structure group $\HH=Sp(4)\cap O(1,3)$ is $2$-dimensional and, in addition to left
translations, we have the following $2$-parameter family of isometries:
\begin{align*}
&(x_1,y_1,x_2,y_2,z)\longmapsto (x_1\cosh\theta_1+y_1\sinh\theta_1,x_1\sinh\theta_1+y_1\cosh\theta_1,x_2,y_2,z)\\
&(x_1,y_1,x_2,y_2,z)\longmapsto (x_1,y_1,x_2\cos\theta_2-y_2\sin\theta_2,x_2\sin\theta_2+y_2\cos\theta _2,z).
\end{align*}
Thus, in this case $\dim\mathfrak{I}(\R^5,\DD,g)=7$.

Finally, in the case (3) the structure group $\HH=Sp(4)\cap O(2,2)$ is $4$-dimensional and, in addition to left translations, we have the following $4$-parameter family of isometries:
\begin{align*}
&(x_1,y_1,x_2,y_2,z)\longmapsto (x_1\cosh \theta_1+y_1\sinh \theta _1,x_1\sinh \theta _1+y_1\cosh \theta_1,x_2,y_2,z)\\
&(x_1,y_1,x_2,y_2,z)\longmapsto (x_1,y_1,x_2\cosh \theta_2+y_2\sinh \theta _2,x_2\sinh \theta_2+y_2\cosh \theta _2,z)\\
&(x_1,y_1,x_2,y_2,z)\longmapsto (x_2\cosh \theta_3+y_2\sinh \theta _3,x_2\sinh \theta _3+y_2\cosh \theta_3,x_1,y_1,z)\\
&(x_1,y_1,x_2,y_2,z)\longmapsto (x_2,y_2,x_1\cosh \theta_4+y_1\sinh \theta _4,x_1\sinh \theta _4+y_1\cosh \theta _4,z).
\end{align*}
Thus, as in the sub-Riemannian case $\dim\mathfrak{I}(\R^5,\DD,g)=9$.

Note that in all three cases 
$$
\mathfrak{I}(\R^{5},\DD,g)=\R^{5}\ltimes \HH,
$$
where $\HH$ is the corresponding structure group.

\paragraph{Acknowledgements.} The work of Wojciech Kry\'nski has been partially supported by the Polish National Science Centre grant DEC-2011/03/D/ST1/03902.

\end{document}